\theoremstyle{plain}
\newtheorem{theorem}{Theorem}[section]
\newtheorem{proposition}[theorem]{Proposition}
\newtheorem{lemma}[theorem]{Lemma}
\theoremstyle{definition}
\newtheorem{definition}[theorem]{Definition}
\newcommand{\tp}{{\scriptscriptstyle\mathsf{T}}}
\newcommand{\W}{{\scriptscriptstyle\mathsf{W}}}
\newcommand{\N}{{\scriptscriptstyle\mathsf{N}}}
\newcommand{\MP}{{\scriptscriptstyle\mathsf{MP}}}
\let\oh=\circ
\newcommand{\ccirc}{\mathbin{\mathchoice
  {\xcirc\scriptstyle}
  {\xcirc\scriptstyle}
  {\xcirc\scriptscriptstyle}
  {\xcirc\scriptscriptstyle}
}}
\newcommand{\xcirc}[1]{\vcenter{\hbox{$#1\oh$}}}
\let\circ\ccirc
\let\O\undefined
\let\S\undefined
\DeclareMathOperator{\O}{O}
\DeclareMathOperator{\V}{V}
\DeclareMathOperator{\rank}{rank}
\DeclareMathOperator{\S}{S}
\DeclareMathOperator{\SO}{SO}
\DeclareMathOperator{\Alt}{\mathsf{\Lambda}}
\DeclareMathOperator{\Sym}{\mathsf{S}}
\DeclareMathOperator{\tr}{tr}
\DeclareMathOperator{\Gr}{Gr}
\DeclareMathOperator{\diag}{diag}
\DeclareMathOperator{\Stab}{Stab}
\DeclareMathOperator{\Flag}{Flag}
\begin{document}
\title{Minimal equivariant embeddings of the Grassmannian and flag manifold}
\author[L.-H.~Lim]{Lek-Heng~Lim}
\address{Computational and Applied Mathematics Initiative, Department of Statistics,
University of Chicago, Chicago, IL 60637-1514.}
\email{lekheng@uchicago.edu}
\author[K.~Ye]{Ke Ye}
\address{KLMM, Academy of Mathematics and Systems Science, Chinese Academy of Sciences, Beijing 100190, China}
\email{keyk@amss.ac.cn}

\begin{abstract}
We show that the flag manifold $\Flag(k_1,\dots,  k_p, \mathbb{R}^n)$, with Grassmannian the special case $p=1$, has an $\SO_n(\mathbb{R})$-equivariant embedding in an Euclidean space of dimension $(n-1)(n+2)/2$, two orders of magnitude below the current best known result. We will show that the value $(n-1)(n+2)/2$ is the smallest possible and that any  $\SO_n(\mathbb{R})$-equivariant embedding of $\Flag(k_1,\dots,  k_p, \mathbb{R}^n)$ in an ambient space of minimal dimension is equivariantly equivalent to the aforementioned one.
\end{abstract}

\maketitle

\section{Introduction}\label{sec:intro}

Let $\Flag(k_1,\dots,k_p, \mathbb{R}^n)$ be the manifold of $(k_1,\dots,k_p)$-flags in $\mathbb{R}^n$. We will show that it may be embedded into $\Sym^2(\mathbb{R}^n)$, the space of $n \times n$ real symmetric matrices, as
\begin{equation}\label{eq:iso}
\Flag_{a_1,\dots,  a_{p+1}}(k_1,\dots,  k_p, n) \coloneqq
\left\lbrace
Q {\setlength{\arraycolsep}{0pt}
\begin{bmatrix}
a_1 I_{n_1} & 0 & \cdots & 0\\[-1ex]
0 &  a_2 I_{n_2} &  &  \vdots \\[-1ex]
\vdots &  & \ddots &  0 \\
0 & \cdots & 0 &  a_{p+1} I_{n_{p+1}}
\end{bmatrix}}
Q^\tp\in \Sym^2(\mathbb{R}^n) : Q\in \SO_n(\mathbb{R}) 
\right\rbrace, 
\end{equation}
where $a_1,\dots,  a_{p+1} \in \mathbb{R}$ are any fixed distinct constants. We call this the \emph{isospectral model} for flag manifolds, introduced in \cite{LK24a} for purely computational reasons, in large part to provide a more efficient platform for optimization algorithms \cite{KKL22}. To our surprise, we found that it also solves a problem in equivariant differential geometry.

The current best-known bound for embedding $\Flag(k_1,\dots,k_p, \mathbb{R}^n)$ isometrically into Euclidean space,  due to G\"{u}nther \cite{Gunther89a,Gunther89b,  91Gunther}, requires an ambient space of dimension at least
\begin{equation}\label{eq:m}
\max \Bigl\{ \frac12 m(m+3)+ 5,  \frac12 m(m+5) \Bigr\}, \quad m =\frac{1}{2} n^2 - \frac{1}{2}  \sum_{i=1}^{p+1} (k_i - k_{i-1})^2,
\end{equation}
where $m$ here is the dimension of $\Flag(k_1,\dots,  k_p, \mathbb{R}^n)$ and $0 \eqqcolon k_0 < k_1 < \dots < k_p < k_{p+1} \coloneqq n$ are integers.
Since we may choose the values of $a_1,\dots,  a_{p+1}$ in \eqref{eq:iso} so that $\tr(X) = 0$, we have an embedding  of $\Flag(k_1,\dots,k_p, \mathbb{R}^n)$  into  $\Sym^2_\oh(\mathbb{R}^n)$, the space of $n \times n$ real \emph{traceless} symmetric matrices, which has dimension
\begin{equation}\label{eq:our}
\frac12 (n-1)(n+2);
\end{equation}
and this embedding is both isometric and $\SO_n(\mathbb{R})$-equivariant.
Unlike G\"{u}nther's result, ours is constructive --- in fact the image of the embedding is explicitly given by \eqref{eq:iso}. It is straightforward to verify that the value in \eqref{eq:our} is strictly smaller than G\"{u}nther's bound \eqref{eq:m} for all possible values of $k_1,\dots, k_p$. We will prove that it is also the best possible $\SO_n(\mathbb{R})$-equivariant embedding of the flag manifold --- no such embedding can have ambient dimension lower than \eqref{eq:our}.

As a quick reminder of the various embedding problems in differential geometry: Given a manifold $\mathcal{M}$ of dimension $m$, one would like to find out if there is an embedding $\varepsilon : \mathcal{M} \to \mathbb{R}^d$ into $d$-dimensional Euclidean space with increasingly stronger requirements on $\varepsilon$. There are many celebrated results and we state a few of the most representative ones:
\begin{align*}
\tag*{Whitney \cite{Whitney44}:} \mathcal{M}&\text{ smooth}& \varepsilon &\text{ smooth} &d &= 2m;\\
\tag*{Nash \cite{Nash56}:} \mathcal{M}&\text{ Riemannian}& \varepsilon &\text{ isometric} &d &< \infty;\\
\tag*{Gromov \cite{Gromov86}:} \mathcal{M}&\text{ Riemannian}& \varepsilon &\text{ isometric} & d &= \tfrac12 m(m+5) + 3;\\
\tag*{G\"{u}nther \cite{Gunther89a, Gunther89b, 91Gunther}:} \mathcal{M}&\text{ Riemannian}& \varepsilon &\text{ isometric} & d &= \tfrac12 \max\{m(m+3) + 10,  m(m+5) \};\\
\tag*{Mostow--Palais \cite{Mostow57,Palais57}:} \mathcal{M}&\text{ $G$-manifold}& \varepsilon &\text{ equivariant} & d &<\infty.
\end{align*}
The last result requires $G$ to be a compact Lie group and is evidently not effective. The only explicit estimate we are aware of is a recent one of Wang \cite{Wang22}, which shows that for a finite group acting on a closed manifold, any embedding of $\mathcal{M}$ into $\mathbb{R}^d$ yields a $G$-equivariant embedding into $\mathbb{R}^{d |G|}$. When combined with Whitney's embedding, one gets a $2m |G|$-dimensional $G$-equivariant embedding.

For the specific case $\mathcal{M} = \Flag(k_1,\dots,  k_p, \mathbb{R}^n)$, the only related result we know is \cite{Lam75} but it gives only the existence of an \emph{immersion} into $\mathbb{R}^{\frac12 n(n-1)}$ for a manifold of flags of length $p \le n-2$, and into $\mathbb{R}^{\frac12 n(n-1) + 1}$ for the manifold of complete flags  $\Flag(1,2, \dots,  n-1, \mathbb{R}^n)$. For the special case $p = 1$, i.e., the Grassmannian $\Gr(k,\mathbb{R}^n)$, one may deduce lower bounds for $d$ from results about nonexistence of embeddings like \cite[Theorem~4.8]{MS74},   \cite[Propositions~4.1 and 4.2]{HS81},  \cite[Proposition~5.1]{Mayer99},  \cite[Theorems~1 and 2]{Oproiu76}. We will not survey these given that this article only concerns upper bounds for $d$.

Any $\SO_n(\mathbb{R})$-invariant Riemannian metric $\mathsf{g}$ on the flag manifold comes from its homogeneous space structure
\begin{equation}\label{eq:diffeo}
\Flag(k_1,\dots,  k_p, \mathbb{R}^n) \cong \SO_n(\mathbb{R})/\S(\O_{k_1}(\mathbb{R}) \times \O_{k_2 - k_1}(\mathbb{R}) \times \dots \times \O_{n - k_p}(\mathbb{R}))
\end{equation}
and is in one-to-one correspondence with an $\S(\O_{k_1}(\mathbb{R}) \times \O_{k_2 - k_1}(\mathbb{R}) \times \dots \times \O_{n - k_p}(\mathbb{R}))$-invariant inner product on 
\[
\mathfrak{m} \coloneqq \bigl\lbrace
(B_{ij}) \in \mathbb{R}^{n\times n}:  B_{ij} = -B_{ji}^\tp\in \mathbb{R}^{ (k_i-k_{i-1}) \times (k_j - k_{j-1})},\; B_{ii} = 0,\;1 \le i < j \le p+1
\bigr\rbrace,
\]
noting that $\mathfrak{so}(n) = [\mathfrak{so}(k_1) \oplus \mathfrak{so}(k_2 - k_1) \dots \oplus \mathfrak{so}(n - k_p)] \oplus \mathfrak{m}$. For any arbitrary distinct constants $a_1,\dots ,  a_{p+1}\in \mathbb{R}$,  we have an invariant inner product on $\mathfrak{m}$,
\[
\langle B,  C \rangle \coloneqq 2 \sum_{1 \le i < j \le p+1} (a_i-a_j)^2 \tr(B_{ij}^\tp C_{ij}),
\]
that uniquely determines a metric $\mathsf{g}$ on $\Flag(k_1,\dots,  k_p,  \mathbb{R}^n)$. In case one is wondering about the constants $a_1,\dots,  a_{p+1}$ appearing in \eqref{eq:iso}, this explains their origin. We will assume throughout this article that  $\Flag(n_1,\dots,  n_p, \mathbb{R}^n)$ is equipped with this $\SO_n(\mathbb{R})$-invariant  metric $\mathsf{g}$.

We will denote the dimensions of a minimal dimensional embedding according to Whitney, Nash, Mostow--Palais by 
\begin{align*}
d_\W &\coloneqq \min \{ d: \Flag(n_1,\dots,  n_p, \mathbb{R}^n)\text{ can be smoothly embedded in }\mathbb{R}^d \},  \\
d_\N &\coloneqq \min \{ d: \Flag(n_1,\dots,  n_p, \mathbb{R}^n)\text{ can be isometrically embedded in }\mathbb{R}^d \},  \\
d_\MP &\coloneqq \min \{ d: \Flag(n_1,\dots,  n_p, \mathbb{R}^n)\text{ can be $\SO_n(\mathbb{R})$-equivariantly embedded in }\mathbb{R}^d \},
\end{align*}
respectively.  It follows from our Theorem~\ref{thm:minimal flag} that
\begin{equation}\label{eq:boundN0} 
d_\W \le d_\N \le \frac12 (n-1)(n+2) = d_\MP
\end{equation}
and that \eqref{eq:iso} chosen to have $ a_1(k_1 -k_0) + \dots + a_{p+1} (k_{p+1} -k_p)  = 0$ is the \emph{unique} embedding that attains this. 
This improves the best known bound of Whitney embedding in some cases and that of Nash embedding in all cases; for Mostow--Palais embedding, our value gives $d_\MP$  exactly. To elaborate, our bound is better than Whitney's bound in \cite{Whitney44}, i.e.,
\[
\frac12 (n-1)(n+2) \le n^2 - \sum_{i=1}^{p+1} (k_i - k_{i-1})^2,
\] 
if and only if 
\[
 \sum_{i=1}^{p+1} (k_i - k_{i-1}) (k_i - k_{i-1} + 1)   \le   2\biggl[ 1 + \sum_{1 \le i < j \le p+1} (k_i - k_{i-1})(k_j - k_{j-1}) \biggr],
\]
a simple consequence of $\sum_{i=1}^{p+1} (k_i - k_{i-1}) = n$.

Our bound is always better than G\"{u}nther's bound in \cite{91Gunther}, i.e.,
\[
\frac12 (n-1)(n+2) < \max \Bigl\{ \frac{m(m+3)}{2} + 5,  \frac{m(m+5)}{2} \Bigr\},
\]
where $m$ is as in \eqref{eq:m}. This follows from $m \ge n-1$ and thus $m(m+3) \ge (n-1)(n+2)$.

Our bound is also better than Wang's bound in \cite{Wang22} for all finite subgroups $G \subseteq \SO_n(\mathbb{R})$ of cardinality $|G| > (n-1)(n+2)/4m$, e.g., the alternating group on $n$ letters or cyclic group of order $q > (n-1)(n+2)/4m$. However the real coup here is that our bound holds for any subgroup $G \subseteq \SO_n(\mathbb{R})$ and not just finite ones: Indeed if
\begin{gather*}
d_G \coloneqq \min \{ d: \Flag(n_1,\dots,  n_p, \mathbb{R}^n)\text{ can be $G$-equivariantly embedded in }\mathbb{R}^d \},
\shortintertext{then}
d_G \le \frac12 (n-1)(n+2) = d_{\SO_n(\mathbb{R})} = d_\MP.
\end{gather*}
To the best of our knowledge, effective bounds like Wang's \cite{Wang22} have never before been established for infinite groups.

\section{Notations and some background}\label{sec:rep}

The real vector spaces of real $m \times n$ matrices, symmetric, skew-symmetric, and traceless symmetric $n \times n$ matrices are denoted by $\mathbb{R}^{m \times n}$, $\Sym^2(\mathbb{R}^n)$, $\Alt^2(\mathbb{R}^n)$, $\Sym^2_\oh(\mathbb{R}^n)$ respectively.  We reserve the letter $\mathbb{W}$ for $\mathbb{R}$-vector spaces, $\mathbb{U}$ for $\SO_n(\mathbb{C})$-modules, and $\mathbb{V}$ for real $\SO_n(\mathbb{R})$-modules, usually adorned with various subscripts. Let $n_1 + \dots + n_p = n$. We write
\begin{multline*}
\S(\O_{n_1}(\mathbb{R}) \times \cdots \times \O_{n_p}(\mathbb{R}) ) \coloneqq \{ \diag(X_1,\dots,X_p) \in \O_n(\mathbb{R}) :\\
 X_1 \in \O_{n_1}(\mathbb{R}),\dots, X_p \in \O_{n_p}(\mathbb{R}), \; \det( X_1) \cdots \det(X_p) = 1 \}.
\end{multline*}

We recall some pertinent facts from representation theory for easy reference. Irreducible $\SO_n(\mathbb{C})$-modules are indexed by nonincreasing half-integer sequences $\mu = (\mu_1,\dots, \mu_m)$ where $m \coloneqq \lfloor n/2 \rfloor$, $\mu_m \ge 0$ if $n = 2m + 1$, and $\mu_{m-1} \ge \lvert \mu_m \rvert$ if $n = 2m$  \cite[Proposition~3.1.19]{GW09}. Let $\mathbb{U}_\mu$ be the irreducible $\SO_n(\mathbb{C})$-module indexed by $\mu$. Then its dimension is given by
\begin{equation}\label{eq:dimension}
\dim \mathbb{U}_\mu = \begin{cases}
\prod\limits_{1 \le i < j \le m} \dfrac{\mu_i - \mu_j - i +  j}{j - i} \prod\limits_{1 \le i \le j \le m} \dfrac{\mu_i + \mu_j + n - i -  j}{n - i- j}& \text{if } n = 2m+1, \\[4ex]
\prod\limits_{1 \le i < j \le m} \dfrac{\mu_i - \mu_j - i +  j}{j - i} \dfrac{\mu_i + \mu_j + n - i -  j}{n - i- j} &\text{if } n = 2m.
\end{cases}
\end{equation}
The \emph{fundamental weights} of $\SO_n(\mathbb{C})$ are $\omega_1,\dots, \omega_m \in \mathbb{R}^m$ defined by 
\begin{equation}\label{eq:fundweight}
\omega_i = \begin{cases}
e_1 + \dots + e_i &\text{if } n = 2m+1\text{ and }1\le i\le m-1, \\[1ex]
\frac{1}{2} (e_1 + \dots + e_m) &\text{if } n = 2m+1\text{ and }i = m, \\[1ex]
e_1 + \dots + e_i &\text{if } n = 2m\text{ and }1\le i\le m-2, \\[1ex]
\frac{1}{2} (e_1 + \dots + e_{m-1} - e_m) &\text{if } n = 2m\text{ and }i = m-1, \\[1ex]
\frac{1}{2} (e_1 + \dots + e_{m-1} + e_m) &\text{if } n = 2m\text{ and }i = m,
\end{cases}
\end{equation}
where $e_1,\dots, e_m\in \mathbb{R}^m$ are the standard basis vectors. Irreducible $\SO_n(\mathbb{R})$-modules may be determined from irreducible $\SO_n(\mathbb{C})$-modules in the following situation \cite[Propositions~26.26 and 26.27]{FH91}, which is all we need.
\begin{proposition}\label{prop:irrep}
Let $n \in \mathbb{N}$ and $m = \lfloor n/2 \rfloor$. 
If $\mu_m = 0$ for $n = 2m + 1$ or $\mu_{m-1} = \mu_m = 0$ for $n = 2m$,  then every irreducible $\SO_n(\mathbb{C})$-module takes the form $\mathbb{V}_\mu \otimes \mathbb{C}$ for some irreducible $\SO_n(\mathbb{R})$-module $\mathbb{V}_\mu$.  In particular,  $\dim_{\mathbb{R}} \mathbb{V}_\mu = \dim_{\mathbb{C}} \mathbb{V}_\mu \otimes \mathbb{C}$.
\end{proposition}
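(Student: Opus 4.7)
The plan is to realize each irreducible $\mathbb{U}_\mu$ satisfying the stated hypothesis as the complexification of an $\SO_n(\mathbb{R})$-invariant real subspace inside a tensor power of the defining representation $\mathbb{R}^n$. The argument proceeds in three steps.

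First, I interpret the weight hypothesis. The character lattice of $\SO_n(\mathbb{C})$ decomposes as the integer sublattice $\mathbb{Z}^m$ together with the coset $\mathbb{Z}^m + \tfrac12(1,\dots,1)$; only the former consists of weights that lift through the inclusion $\SO_n \hookrightarrow \mathrm{GL}_n$, while the latter corresponds to the genuine spin representations of the double cover. The hypothesis $\mu_m = 0$ for $n = 2m+1$, respectively $\mu_{m-1} = \mu_m = 0$ for $n = 2m$, forces all coordinates of $\mu$ to be integers, so $\mathbb{U}_\mu$ is a tensor (non-spin) representation.

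Second, I invoke Weyl's construction. For an integer weight $\mu$ with associated partition $\lambda$, the module $\mathbb{U}_\mu$ can be built as the trace-free part, relative to the standard symmetric bilinear form on $\mathbb{C}^n$, of the image of a Young symmetrizer $c_\lambda$ acting on $(\mathbb{C}^n)^{\otimes \lvert\lambda\rvert}$. Each ingredient --- the Young symmetrizer, which has rational coefficients, the bilinear-form contractions used to extract the trace-free piece, and the ambient tensor space --- is defined over $\mathbb{R}$. Applying the identical recipe inside $(\mathbb{R}^n)^{\otimes \lvert\lambda\rvert}$ produces a real $\SO_n(\mathbb{R})$-submodule $\mathbb{V}_\mu$ whose complexification is $\mathbb{U}_\mu$, yielding the claimed dimension identity at once.

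Finally, $\mathbb{V}_\mu$ must be irreducible over $\mathbb{R}$, since any nontrivial real direct-sum decomposition would complexify to a nontrivial decomposition of $\mathbb{U}_\mu$. Equivalently, the existence of the real form $\mathbb{V}_\mu$ says that the Frobenius--Schur indicator of $\mathbb{U}_\mu$ equals $+1$. The main obstacle is the even case $n = 2m$: one has to rule out the possibility that the naturally induced invariant bilinear form on $\mathbb{U}_\mu$ is skew rather than symmetric, which would produce a quaternionic structure instead of a real one. The hypothesis $\mu_{m-1} = \mu_m = 0$, which in particular excludes the two half-spin building blocks from $\mu$, is precisely the condition that guarantees symmetry of this form, and hence a bona fide real structure on $\mathbb{U}_\mu$.
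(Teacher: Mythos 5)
The paper itself offers no proof of this proposition; it is imported verbatim from Fulton--Harris \cite[Propositions~26.26 and 26.27]{FH91}. Your constructive argument via Weyl's construction is a valid independent route to the same conclusion, and the heart of it is sound: the Young symmetrizer and the contraction maps are defined over $\mathbb{Q}$, kernels and images of linear maps commute with extension of scalars, so the harmonic (traceless) Weyl module inside $(\mathbb{R}^n)^{\otimes\lvert\lambda\rvert}$ complexifies to its complex counterpart, and irreducibility over $\mathbb{R}$ then follows from irreducibility of the complexification.

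Two points should be tightened. First, Weyl's construction a priori produces an irreducible $\O_n(\mathbb{C})$-module, and its restriction to $\SO_n(\mathbb{C})$ can split (this happens exactly when $n=2m$ and $\lambda$ has $m$ nonzero rows). Your step two silently uses the fact that it does not split here; the hypothesis $\mu_{m-1}=\mu_m=0$ (resp.\ $\mu_m=0$) guarantees $\lambda$ has at most $m-2$ (resp.\ $m-1$) rows, which is why restriction stays irreducible, but you should say so. Second, the closing paragraph is both redundant and partly incorrect: once the real submodule of $(\mathbb{R}^n)^{\otimes\lvert\lambda\rvert}$ has been exhibited, there is nothing left to ``rule out'' --- the Frobenius--Schur indicator is $+1$ automatically. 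Moreover the claim that $\mu_{m-1}=\mu_m=0$ is ``precisely the condition'' guaranteeing a real structure is false; for instance $\mathsf{\Lambda}^{m-1}(\mathbb{R}^{2m})$ is a real form of the $\SO_{2m}(\mathbb{C})$-module with highest weight $(1,\dots,1,0)$, which has $\mu_{m-1}=1$. The hypothesis is a sufficient condition tailored to what Lemma~\ref{lem:lowdim0} later produces, not a characterization.
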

The following dimension formulas may be calculated from \eqref{eq:dimension} and \eqref{eq:fundweight} \cite[Propositions~20.15 and 20.20]{FH91}:
\begin{equation}\label{eq:irrep2}
\begin{alignedat}{3}
\dim_{\mathbb{C}} \mathbb{U}_{\omega_m} &= 2^m \qquad &\text{if }n &= 2m + 1;\\
\dim_{\mathbb{C}} \mathbb{U}_{\omega_{m-1}} = \dim_{\mathbb{C}} \mathbb{U}_{\omega_m} &= 2^{m-1} \qquad &\text{if } n &= 2m.
\end{alignedat}
\end{equation}

\section{Minimal equivariant embeddings of a flag manifold}\label{sec:em}

In this and the next sections, we frame all our discussions in terms of the flag manifold, and then deduce the corresponding Grassmannian results as the $p = 1$ special case.

The two goals of  this section are to prove Proposition~\ref{prop:flagV2} and Theorem~\ref{thm:minimal flag}. Collectively they show that if $\varepsilon: \Flag(k_1,\dots,k_p, \mathbb{R}^n) \to \mathbb{V}$ is any $\SO_n(\mathbb{R})$-equivariant embedding of the lowest dimension, then $\mathbb{V}$ has to be isomorphic to $\Sym^2_\oh (\mathbb{R}^n)$, the space of traceless symmetric matrices. In particular it is not possible to equivariantly embed $\Flag(k_1,\dots,k_p, \mathbb{R}^n)$ into any vector space of dimension less than $\dim \Sym^2_\oh (\mathbb{R}^n)  = \frac12 (n-1)(n+2)$. The isospectral model \eqref{eq:iso} will be seen to be the image of $\varepsilon$ in Proposition~\ref{prop:min}.

We first define the notion of equivariant embedding, which has been extensively studied in a more general setting \cite{Mostow57,  Bredon72,  LV83, Timashev11}.
\begin{definition}[Equivariant embedding and equivariant submanifold]
Let $G$ be a group, $\mathbb{V}$ be a $G$-module, and $\mathcal{M}$ be a $G$-manifold.  An embedding $\varepsilon: \mathcal{M} \to \mathbb{V}$ is called a $G$-equivariant embedding if $\varepsilon(g \cdot x) = g \cdot \varepsilon(x)$ for all $x\in \mathcal{M}$ and $g \in G$.  In this case, the embedded image $\varepsilon(\mathcal{M})$ is called a $G$-submanifold of $\mathbb{V}$.
\end{definition}
In our context $G = \SO_n(\mathbb{R})$, $\mathcal{M} = \Flag(k_1,\dots,k_p, \mathbb{R}^n)$, $\mathbb{V} = \mathbb{R}^{n \times n}$, $G$ acts on $\mathcal{M}$ through
\[
X \cdot (\mathbb{W}_1 \subseteq \dots \subseteq \mathbb{W}_p ) \coloneqq (X \cdot \mathbb{W}_1 \subseteq \dots \subseteq  X \cdot \mathbb{W}_p),
\]
and on $\mathbb{V}$ by conjugation,
\[
\SO_n(\mathbb{R}) \times  \mathbb{R}^{n \times n} \to  \mathbb{R}^{n \times n}, \quad (Q,  X) \mapsto Q \cdot X \coloneqq Q X Q^\tp.
\]
In this case the decomposition of the $\SO_n(\mathbb{R})$-module $\mathbb{R}^{n \times n}$ into irreducible $\SO_n(\mathbb{R})$-submodules is very simple, given by
\[
\mathbb{R}^{n\times n} = \mathbb{R}I \oplus \Alt^2(\mathbb{R}^n) \oplus \Sym^2_\oh (\mathbb{R}^n),
\]
where $\mathbb{R} I \coloneqq \{\gamma I \in \mathbb{R}^{n\times n}:  \gamma \in \mathbb{R}\}$ denotes the space of constant diagonal matrices. In the notations of Section~\ref{sec:rep},
\[
\mathbb{R}^{n\times n} \simeq \mathbb{V}_{0,\dots, 0} \oplus \mathbb{V}_{1,1,0\dots, 0} \oplus \mathbb{V}_{2,0\dots, 0},
\]
where the irreducible $\SO_n(\mathbb{R})$-submodules are indexed by a non-increasing sequence of half-integers; note that we write $\simeq$ as these are only determined up to $\SO_n(\mathbb{R})$-module isomorphisms. So
\[
\mathbb{V}_{0,\dots, 0} \simeq \mathbb{R} I, \quad
\mathbb{V}_{1,1,0\dots, 0} \simeq \Alt^2(\mathbb{R}^n), \quad \mathbb{V}_{2,0\dots, 0} \simeq \Sym^2_\oh (\mathbb{R}^n).
\]
Strictly speaking we should use the notations on the left when referring to these spaces as $\SO_n(\mathbb{R})$-modules and those on the right when referring to them as vector spaces.

There is an important fourth irreducible $\SO_n(\mathbb{R})$-module not among those that appeared so far. Consider the action of $\SO_n(\mathbb{R})$ on $\mathbb{R}^n$ through matrix-vector product,
\[
\SO_n(\mathbb{R}) \times \mathbb{R}^n \to \mathbb{R}^n,\quad (Q,  v) \mapsto Q\cdot v \coloneqq Qv.
\]
This turns $\mathbb{R}^n$ into an irreducible $\SO_n(\mathbb{R})$-module and when regarded as such it is denoted $\mathbb{V}_{1,0\dots, 0} $. Collectively these four irreducible $\SO_n(\mathbb{R})$-modules are the ones that matter most to us, with dimensions
\[
\dim \mathbb{V}_{0,\dots, 0} = 1, \quad \dim \mathbb{V}_{1,0,\dots, 0} = n, \quad \dim \mathbb{V}_{1,1,0\dots, 0} = \frac12 n(n-1),\quad \dim \mathbb{V}_{2,0\dots, 0} = \frac12 (n-1)(n+2).
\]
Note that we have adopted the standard convention that a trailing sequence of zeros in the indices contains as many copies as is necessary to pad the entire sequence of indices to its requisite length.

We begin by showing that the flag manifold may be equivariantly embedded in $\Sym^2_\oh (\mathbb{R}^n)$.
\begin{proposition}[Flag manifold as equivariant submanifold]\label{prop:flagV2}
Let $0 \eqqcolon k_0  <   k_1 < \cdots <  k_p  < k_{p+1} \coloneqq n$ be integers.  Then $\Flag(k_1,\dots,  k_p,\mathbb{R}^n)$ may be embedded as an $\SO_n(\mathbb{R})$-submanifold of $\mathbb{V}_{2,0\dots, 0} \simeq \Sym^2_\oh (\mathbb{R}^n)$.
\end{proposition}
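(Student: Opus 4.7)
The plan is to turn the isospectral model \eqref{eq:iso} into an honest $\SO_n(\mathbb{R})$-equivariant embedding by realizing it as the orbit map of a point in $\Sym^2_\oh(\mathbb{R}^n)$ whose stabilizer coincides with the isotropy group in the homogeneous-space description \eqref{eq:diffeo}. Concretely, pick distinct reals $a_1,\dots,a_{p+1}$ satisfying the single linear constraint $\sum_{i=1}^{p+1} a_i (k_i-k_{i-1})=0$, set $n_i\coloneqq k_i-k_{i-1}$, and let
\[
D\coloneqq \diag(a_1 I_{n_1},\,a_2 I_{n_2},\,\dots,\,a_{p+1} I_{n_{p+1}})\in\Sym^2_\oh(\mathbb{R}^n).
\]
Define $\varepsilon\colon \SO_n(\mathbb{R})\to \Sym^2_\oh(\mathbb{R}^n)$ by $\varepsilon(Q)=QDQ^\tp$ and show it descends to the required map on $\Flag(k_1,\dots,k_p,\mathbb{R}^n)$.

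First I would verify equivariance: the $\SO_n(\mathbb{R})$-action on $\Sym^2_\oh(\mathbb{R}^n)$ is conjugation, and $\varepsilon(XQ)=X(QDQ^\tp)X^\tp=X\cdot\varepsilon(Q)$ is immediate. Next comes the key algebraic observation that identifies the stabilizer of $D$ under conjugation. Because the scalars $a_1,\dots,a_{p+1}$ are pairwise distinct, the eigenspaces of $D$ are exactly the coordinate subspaces of dimensions $n_1,\dots,n_{p+1}$; hence $QDQ^\tp=D$ if and only if $Q$ preserves each eigenspace, i.e.\ $Q\in\O_{n_1}(\mathbb{R})\times\cdots\times\O_{n_{p+1}}(\mathbb{R})$ in block-diagonal form, and within $\SO_n(\mathbb{R})$ this is precisely $\S(\O_{n_1}(\mathbb{R})\times\cdots\times\O_{n_{p+1}}(\mathbb{R}))$. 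Invoking \eqref{eq:diffeo}, this shows that $\varepsilon$ factors through the quotient as an injective map
\[
\bar\varepsilon\colon \SO_n(\mathbb{R})/\S(\O_{n_1}(\mathbb{R})\times\cdots\times\O_{n_{p+1}}(\mathbb{R}))\longrightarrow \Sym^2_\oh(\mathbb{R}^n).
\]

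Next I would argue that $\bar\varepsilon$ is a smooth embedding. Smoothness is automatic since $\varepsilon$ is polynomial in $Q$ and the quotient map $\SO_n(\mathbb{R})\to\Flag(k_1,\dots,k_p,\mathbb{R}^n)$ is a smooth submersion. By homogeneity, to show $\bar\varepsilon$ is an immersion it suffices to check injectivity of its differential at a single point (the identity coset), which reduces to showing that $[B,D]\neq 0$ for every nonzero $B\in\mathfrak{m}$; this follows from the same block-scalar structure of $D$ together with the fact that $\mathfrak{m}$ consists of matrices with vanishing diagonal blocks, so $[B,D]$ has $(i,j)$-block $(a_j-a_i)B_{ij}\neq 0$ whenever $B_{ij}\neq 0$. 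Finally, since $\Flag(k_1,\dots,k_p,\mathbb{R}^n)$ is compact, an injective smooth immersion into a Hausdorff manifold is automatically a smooth embedding, so $\bar\varepsilon(\Flag(k_1,\dots,k_p,\mathbb{R}^n))$ is an $\SO_n(\mathbb{R})$-submanifold of $\Sym^2_\oh(\mathbb{R}^n)$, as desired.

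The one nontrivial point --- and the step I expect to require the most care --- is the stabilizer computation, since one has to make sure that the centralizer of $D$ inside $\O_n(\mathbb{R})$ intersected with $\SO_n(\mathbb{R})$ is exactly $\S(\O_{n_1}(\mathbb{R})\times\cdots\times\O_{n_{p+1}}(\mathbb{R}))$ rather than something larger; the distinctness of the $a_i$ is what forces the block-diagonal form, and the $\SO_n(\mathbb{R})$ constraint produces the determinant-one restriction that appears in the definition of $\S(\cdots)$. Everything else, including the tracelessness of the image (guaranteed by $\sum a_i n_i = 0$), is a routine consequence of the construction.
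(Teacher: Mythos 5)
Your proposal is correct and follows essentially the same route as the paper: choose a traceless block-scalar diagonal matrix with distinct eigenvalues whose conjugation stabilizer in $\SO_n(\mathbb{R})$ is exactly $\S(\O_{n_1}(\mathbb{R})\times\cdots\times\O_{n_{p+1}}(\mathbb{R}))$, and identify the $\SO_n(\mathbb{R})$-orbit with the flag manifold via \eqref{eq:diffeo}. You simply make explicit the immersion check via $B\mapsto [B,D]$ and the compactness argument, which the paper subsumes into the standard orbit--stabilizer diffeomorphism for compact group actions.
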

\begin{proof}
Let $0 \eqqcolon k_0  <   k_1 < \cdots <  k_p  < k_{p+1} \coloneqq n$ be integers and
\[
n_{i+1} = k_{i+1} - k_i,\quad i =1,\dots, p.
\]
Let $a_1 >  \cdots > a_{p} > 0$ be real numbers and 
\[
a_{p+1} \coloneqq - \frac{n_1  a_1 + \dots + n_p a_p}{n_{p+1}}.
\]
Write $\alpha  \coloneqq (a_1,\dots, a_{p+1})$, $\mu \coloneqq (n_1,\dots,  n_{p+1})$, and consider the block diagonal matrix 
\[
I_{\alpha ,\mu} = \begin{bmatrix}
a_1 I_{n_1} & 0 & \cdots & 0 \\
0 &  a_2 I_{n_2} & \cdots & 0 \\
\vdots & \vdots & \ddots & \vdots \\
0 &  0 & \cdots &  a_{p+1} I_{n_{p+1}}
\end{bmatrix} \in \mathbb{R}^{n\times n}.
\]
Since $\tr(I_{\alpha ,\mu}) = 0$, we have $I_{\alpha ,\mu} \in \Sym^2_\oh (\mathbb{R}^n)$.  Since $\SO_n(\mathbb{R})$ acts on $\Sym^2_\oh (\mathbb{R}^n)$  by conjugation and the stablizer of $I_{\alpha ,\mu}$ is $\S(\O_{n_1}(\mathbb{R}) \times \cdots \times \O_{n_{p+1}}(\mathbb{R}))$, we have 
\begin{align*}
 \lbrace Q I_{\alpha ,\mu} Q^\tp \in \Sym^2_\oh (\mathbb{R}^n) :  Q\in \SO_n(\mathbb{R})  \rbrace &\cong \SO_{n}(\mathbb{R})/ \S(\O_{n_1}(\mathbb{R}) \times \cdots \times \O_{n_{p+1}}(\mathbb{R})) \\
&\cong \Flag(k_1,\dots,  k_p, \mathbb{R}^n)
\end{align*}
by \eqref{eq:diffeo}. Since the first set is contained in $\Sym^2_\oh (\mathbb{R}^n)$, we obtain an embedding $\varepsilon$ of $\Flag(k_1,\dots,  k_p, n)$ into $\Sym^2_\oh (\mathbb{R}^n)$.  The $\SO_n(\mathbb{R})$-equivariance of $\varepsilon$ follows from that of $\varphi_1,\varphi_3,\theta$, a routine verification.
\end{proof}

The requirement that $n \ge 17$ for our next result is to ensure that $2^{\lfloor (n-1) /2\rfloor} > \frac12 (n-1)(n+2)$. As the remaining results in this section all depend on the next one, this mild requirement will propagate to those results too.
\begin{lemma}\label{lem:lowdim0}
Let $n \in \mathbb{N}$ with $n\ge 17$.  Let $\mathbb{V}$ be an irreducible $\SO_n(\mathbb{R})$-module with $\dim_{\mathbb{R}} \mathbb{V} \le \frac12 (n-1)(n+2)$.  Then $\mathbb{V} = \mathbb{V}_\mu$ for some
\begin{equation}\label{eq:mu}
\mu = \begin{cases}
 (\mu_1,  \dots, \mu_{m-1}, 0) \in \mathbb{N}^m &\text{~if~}n =2m + 1,  \\
 (\mu_1,  \dots, \mu_{m-2}, 0,0) \in \mathbb{N}^m &\text{~if~}n =2m.
 \end{cases}
\end{equation}
\end{lemma}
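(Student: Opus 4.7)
The plan is to prove the contrapositive via a dimension estimate. Any irreducible real $\SO_n(\mathbb{R})$-module $\mathbb{V}$ has a complexification $\mathbb{V} \otimes_\mathbb{R} \mathbb{C}$ that decomposes into at most two irreducible complex $\SO_n(\mathbb{C})$-modules (one if $\mathbb{V}$ is of real type, two if complex or quaternionic type), so any $\mathbb{U}_\mu$ appearing as a summand of $\mathbb{V} \otimes \mathbb{C}$ satisfies $\dim_\mathbb{C} \mathbb{U}_\mu \leq \dim_\mathbb{R} \mathbb{V}$. I would show that whenever $\mu$ fails the trailing-zero condition \eqref{eq:mu}, we have $\dim_\mathbb{C} \mathbb{U}_\mu \geq 2^{\lfloor (n-1)/2 \rfloor}$; combining this with the inequality $2^{\lfloor (n-1)/2 \rfloor} > \frac{1}{2}(n-1)(n+2)$ for $n \geq 17$ (which holds at $n = 17$ since $2^8 = 256 > 152$ and persists for larger $n$ because exponentials beat quadratics) contradicts the hypothesis on $\dim_\mathbb{R} \mathbb{V}$. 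Hence every summand $\mathbb{U}_\mu$ has trailing-zero $\mu$; by Proposition~\ref{prop:irrep} each such $\mathbb{U}_\mu$ is of real type, which forces $\mathbb{V}$ itself to be of real type (complex or quaternionic $\mathbb{V}$ would produce summands that are not of real type), so $\mathbb{V} \otimes \mathbb{C} \cong \mathbb{U}_\mu$ and $\mathbb{V} \cong \mathbb{V}_\mu$.

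The technical heart is the bound $\dim_\mathbb{C} \mathbb{U}_\mu \geq 2^{\lfloor (n-1)/2 \rfloor}$ for any $\mu$ violating \eqref{eq:mu}. For odd $n = 2m+1$, the violation reads $\mu_m \geq 1/2$, whence $\mu_i \geq 1/2$ for all $i$. Comparing \eqref{eq:dimension} termwise against the spin weight $\omega_m = (1/2, \dots, 1/2)$: every factor $(\mu_i - \mu_j + j - i)/(j - i) \geq 1$ matches or exceeds $\omega_m$'s (which equals $1$), and $(\mu_i + \mu_j + n - i - j)/(n - i - j) \geq (1 + n - i - j)/(n - i - j)$ matches or exceeds $\omega_m$'s. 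This yields $\dim_\mathbb{C} \mathbb{U}_\mu \geq \dim_\mathbb{C} \mathbb{U}_{\omega_m} = 2^m$ by \eqref{eq:irrep2}.

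For even $n = 2m$, the violation reads $\mu_{m-1} \geq 1/2$ (since $\mu_{m-1} \geq |\mu_m|$), but now $\mu_m$ may be negative, and the factors $(\mu_i + \mu_m + n - i - m)/(n - i - m)$ at $j = m$ can dip below $\omega_m$'s, breaking a purely termwise comparison. The repair is to pair the two Weyl factors at $j = m$ for each $i$:
\[
(\mu_i - \mu_m + m - i)(\mu_i + \mu_m + m - i) = (\mu_i + m - i)^2 - \mu_m^2,
\]
which depends only on $\mu_m^2$. Using $\mu_i \geq \mu_{m-1} \geq \max\{1/2, |\mu_m|\}$, a short case analysis (splitting on whether $|\mu_m| \leq 1/2$ or $|\mu_m| \geq 1/2$) shows the combined numerator is at least $(m-i)(m-i+1)$, matching $\omega_m$'s combined $j = m$ numerator. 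The factors with $i < j < m$ are bounded exactly as in the odd case, so $\dim_\mathbb{C} \mathbb{U}_\mu \geq \dim_\mathbb{C} \mathbb{U}_{\omega_m} = 2^{m-1}$.

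The main obstacle is precisely the even-$n$ case with negative $\mu_m$: no single (half-)spin weight dominates all admissible $\mu$ componentwise, so a purely factor-by-factor argument fails. Pairing Weyl factors into an even function of $\mu_m$ cleanly sidesteps this obstacle, after which the remaining estimates reduce to the odd case.
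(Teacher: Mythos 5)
Your proof is correct and follows the same overall strategy as the paper: complexify $\mathbb{V}$, observe that any irreducible summand $\mathbb{U}_\mu$ of $\mathbb{V}\otimes\mathbb{C}$ satisfies $\dim_\mathbb{C}\mathbb{U}_\mu\le\dim_\mathbb{R}\mathbb{V}$, show that violating the trailing-zero condition forces $\dim_\mathbb{C}\mathbb{U}_\mu\ge 2^{\lfloor(n-1)/2\rfloor}>\tfrac12(n-1)(n+2)$, and then invoke Proposition~\ref{prop:irrep} to conclude $\mathbb{V}=\mathbb{V}_\mu$. Where you differ is in how you obtain the spin-dimension lower bound. The paper writes $\mu=c_1\omega_1+\dots+c_m\omega_m$ in the fundamental-weight basis and, if $c_m>0$ (or $c_{m-1}>0$ for even $n$), invokes the monotonicity $\dim\mathbb{U}_\mu\ge\dim\mathbb{U}_{\omega_m}$ without proof --- this is the standard consequence of the Weyl dimension formula applied to $\mu=(\mu-\omega_m)+\omega_m$ with $\mu-\omega_m$ still dominant. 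You instead prove the inequality directly from the explicit formula \eqref{eq:dimension}, comparing factors termwise against $\omega_m=(1/2,\dots,1/2)$, and your pairing $(\mu_i-\mu_m+m-i)(\mu_i+\mu_m+m-i)=(\mu_i+m-i)^2-\mu_m^2$ correctly handles the even-$n$ obstruction that $\mu_m$ may be negative and so no single half-spin weight dominates componentwise. This makes your argument more self-contained at the cost of a short case split; it also dispenses with the fundamental-weight bookkeeping, which is a minor simplification. Your remark on real/complex/quaternionic type is a slightly more explicit version of the paper's Schur-lemma step and is also correct.
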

\begin{proof}
Note that $\mathbb{V} \otimes \mathbb{C}$ is an $\SO_n(\mathbb{C})$-module, but it may not be irreducible.  Let
\[
\mathbb{V} \otimes \mathbb{C} =\bigoplus_{\mu} \mathbb{U}_\mu^{\oplus p_\mu}
\]
be its decomposition into  irreducible  $\SO_n(\mathbb{C})$-submodules $\mathbb{U}_\mu$ as described in Section~\ref{sec:rep}, with $p_\mu$ the multiplicity of $\mathbb{U}_\mu$. For any $\mathbb{U}_\mu$ in the direct summands above, we must have $\dim_{\mathbb{C}} \mathbb{U}_\mu \le \frac12 (n-1)(n+2)$.

We may express the half-integer sequence $\mu$ as a nonnegative integer combination of fundamental weights $\mu = c_1 \omega_1 + \dots + c_m \omega_m$.  If $n = 2m + 1 \ge 17$ and $c_m > 0$, then 
\[
\dim_{\mathbb{C}} \mathbb{U}_\mu \ge \dim_{\mathbb{C}} \mathbb{U}_{\omega_m} = 2^{m} > \frac12 (n-1)(n+2),
\]
where the equality is by \eqref{eq:irrep2} and the last inequality is the reason for requiring $n \ge 17$.  Similarly, if $n = 2m \ge 17$ and $c_m > 0$ or $c_{m-1} > 0$,  then
\[
\dim_{\mathbb{C}} \mathbb{U}_\mu \ge \dim_{\mathbb{C}} \mathbb{U}_{\omega_m}= \dim_{\mathbb{C}} \mathbb{U}_{\omega_{m-1}} = 2^{m-1} > \frac12 (n-1)(n+2)
\]
where again the equality is by \eqref{eq:irrep2}.
In either case, we have a contradiction to $\dim_{\mathbb{C}} \mathbb{U}_\mu \le \frac12 (n-1)(n+2)$.  Hence we obtain
\[
\mu = \begin{cases}
c_1 \omega_1 + \dots + c_{m-1} \omega_{m-1} &\text{if } n =2m + 1,  \\
c_1 \omega_1 + \dots + c_{m-2} \omega_{m-2} &\text{if } n =2m,
 \end{cases}
\]
and therefore \eqref{eq:mu}, with an application of \eqref{eq:fundweight}.
It follows from Proposition~\ref{prop:irrep} that $\mathbb{U}_\mu = \mathbb{V}_\mu \otimes \mathbb{C}$ for some irreducible $\SO_n(\mathbb{R})$-submodule $\mathbb{V}_\mu$.  Since $\mathbb{V}$ is also irreducible, we must have $\mathbb{V} = \mathbb{V}_\mu$. 
\end{proof}

The next lemma is a key ingredient for our main result of this section. In its proof, we adopt the standard shorthand for repeated ones in the indices: $1^q \coloneqq \underbrace{1,\dots,  1}_{q\text{ copies}}$ for any $q \in \mathbb{N}$.
\begin{lemma}\label{lem:lowdim}
Let $n \in \mathbb{N}$ with $n\ge 17$. Then the only irreducible $\SO_n(\mathbb{R})$-modules of dimensions strictly smaller than $\frac12 (n-1)(n+2)$ are
\[
\mathbb{V}_{0,\dots, 0} \simeq \mathbb{R},  \qquad \mathbb{V}_{1,0,\dots, 0} \simeq \mathbb{R}^n, \qquad \mathbb{V}_{1,1,0\dots, 0} \simeq \Alt^2(\mathbb{R}^n);
\]
and the only irreducible $\SO_n(\mathbb{R})$-module of dimension exactly $\frac{1}{2}(n-1)(n+2)$ is
\[
\mathbb{V}_{2,0,\dots, 0} \simeq \mathsf{S}^2_\oh(\mathbb{R}^n).
\]
\end{lemma}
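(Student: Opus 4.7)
The plan is to combine Lemma~\ref{lem:lowdim0} (which restricts to certain integer highest weights) with a monotonicity-based finite case analysis via the Weyl dimension formula \eqref{eq:dimension}.

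First, by Lemma~\ref{lem:lowdim0} and Proposition~\ref{prop:irrep}, any candidate $\mathbb{V}$ has the form $\mathbb{V} = \mathbb{V}_\mu$ with $\mu \in \mathbb{N}^m$ as in \eqref{eq:mu}, and $\dim_{\mathbb{R}} \mathbb{V}_\mu = \dim_{\mathbb{C}} \mathbb{U}_\mu$, so I may compute with \eqref{eq:dimension} directly. Using \eqref{eq:fundweight}, I write $\mu = c_1 \omega_1 + \cdots + c_\ell \omega_\ell$ with $c_i \in \mathbb{N}$, where $\ell = m - 1$ for $n = 2m + 1$ and $\ell = m - 2$ for $n = 2m$; note that $n \ge 17$ forces $\ell \ge 6$.

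Second, I invoke a monotonicity observation: since each $\omega_i$ is dominant, $\langle \omega_i, \alpha \rangle \ge 0$ for every positive root $\alpha$ of $\mathfrak{so}(n)$, and Weyl's dimension formula yields $\dim \mathbb{V}_{\mu + \omega_i} \ge \dim \mathbb{V}_\mu$. Since the ``allowed'' set $\{0, \omega_1, \omega_2, 2\omega_1\}$ is downward-closed under the partial order $\mu' \preceq \mu \iff \mu - \mu' \in \sum_i \mathbb{N}\omega_i$, every excluded weight dominates a \emph{minimal} excluded weight; a direct enumeration yields
\[
3\omega_1, \qquad \omega_1 + \omega_2, \qquad 2\omega_2, \qquad \omega_i \text{ for } 3 \le i \le \ell.
\]

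Third, I specialize \eqref{eq:dimension} at each minimal excluded weight and verify $\dim \mathbb{V}_\mu > \tfrac12(n-1)(n+2)$. For $\omega_i$ with $3 \le i \le \ell$, $\mathbb{V}_{\omega_i} \simeq \Alt^i(\mathbb{R}^n)$ (irreducible since $i < n/2$), so $\dim \mathbb{V}_{\omega_i} = \binom{n}{i}$; this is minimized at $i = 3$, and $\binom{n}{3} > \tfrac12(n-1)(n+2)$ for all $n \ge 7$ by elementary algebra. For $3\omega_1$, \eqref{eq:dimension} yields $\dim \mathbb{V}_{3\omega_1} = \tfrac{1}{6} n(n-1)(n+4)$, and for $\omega_1 + \omega_2$ and $2\omega_2$ one obtains explicit polynomials in $n$ of degree at least three, each strictly exceeding $\tfrac12(n-1)(n+2)$ for $n \ge 17$ by a wide margin. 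Uniqueness at the equality threshold (that $\dim \mathbb{V}_\mu = \tfrac12(n-1)(n+2)$ forces $\mu = 2\omega_1$) is then automatic, since every excluded weight strictly dominates some minimal excluded weight, hence has strictly larger dimension.

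The main obstacle is simply the tedium of specializing \eqref{eq:dimension} at $\omega_1 + \omega_2$ and $2\omega_2$, where the products have many nontrivial factors and the cases $n = 2m+1$ and $n = 2m$ must be treated separately. The structural part of the argument is clean.
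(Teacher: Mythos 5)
Your proposal is correct and reaches the same conclusion as the paper, but the organization of the argument is genuinely different, and in my view tighter. Both proofs invoke Lemma~\ref{lem:lowdim0} to reduce to integer highest weights supported on $\omega_1,\dots,\omega_\ell$, and both ultimately rest on a monotonicity property of the dimension formula. The paper encodes this monotonicity concretely via the continuous extension $f_n$ of \eqref{eq:dimension}: shifting $(\mu_1,\dots,\mu_k,0,\dots,0)$ down to $(\mu_1-\delta,\dots,\mu_k-\delta,0,\dots,0)$ strictly decreases $f_n$, which for $\delta=1$ amounts to $\dim\mathbb{V}_\mu > \dim\mathbb{V}_{\mu-\omega_k}$ with $k$ the last nonzero index. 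You instead use the abstract Weyl dimension formula to get $\dim\mathbb{V}_{\mu+\omega_i}\ge\dim\mathbb{V}_\mu$ for \emph{every} fundamental weight $\omega_i$ in the cone, which is a mild generalization of the same fact. Where the two proofs really diverge is in the bookkeeping. The paper performs a hands-on case split on $\mu_1-\mu_2\in\{0,1,\ge 2\}$ and further sub-splits on $\mu_1-\mu_3$, $\mu_3-\mu_4$, etc., threading each case down to a weight such as $(2,1,0,\dots,0)$, $(2,1,1,\dots)$, $(1^q,0,\dots)$, or $(\mu_1,0,\dots,0)$ and checking those by direct computation. You instead observe that the allowed set $\{0,\omega_1,\omega_2,2\omega_1\}$ is a downward-closed order ideal in $\mathbb{N}^\ell$ under the coordinatewise order, so every excluded weight dominates a \emph{minimal} excluded weight, and the minimal ones are exactly $3\omega_1$, $\omega_1+\omega_2$, $2\omega_2$, and $\omega_i$ for $3\le i\le\ell$; then you only need to check these finitely many. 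This is cleaner: it produces a shorter and clearly complete list of weights to verify, and it explicitly handles cases such as $2\omega_2=(2,2,0,\dots,0)$ that the paper's case phrasing in Case~II treats somewhat obliquely. What the paper's version buys in exchange is self-containment: it never appeals to Weyl's formula in abstract form, only to the explicit products \eqref{eq:dimension}, and so avoids any discussion of roots, coroots, or dominance. Your remaining work --- specializing \eqref{eq:dimension} at $\omega_1+\omega_2$ and $2\omega_2$ for both parities of $n$ --- is tedious but routine, and the paper's proof carries an analogous unspelled-out computation (``by a direct calculation'') at $(2,1,0,\dots,0)$, so the two proofs are on equal footing there.
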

\begin{proof}
Let $n \in \mathbb{N}$ and $m \coloneqq \lfloor n/2 \rfloor$. Let $f_n: \{ x\in \mathbb{R}^m: x_1 \ge \cdots \ge x_m \} \to \mathbb{R}$ be defined by
\[
f_n(x_1,\dots, x_m) = \begin{cases}
\prod\limits_{1 \le i < j \le m} \dfrac{x_i - x_j - i +  j}{j - i} \prod\limits_{1 \le i \le j \le m} \dfrac{x_i + x_j + n - i -  j}{n - i- j} &\text{if } n = 2m+1, \\[4ex]
\prod\limits_{1 \le i < j \le m} \dfrac{x_i - x_j - i +  j}{j - i} \dfrac{x_i + x_j + n - i -  j}{n - i- j} &\text{if } n = 2m.
\end{cases}
\]
So for any non-increasing sequence of half-integers $\mu = (\mu_1,\dots, \mu_m)$,  $f_n(\mu)$ is exactly the expression in  \eqref{eq:dimension}. Therefore $f_n(\mu) = \dim_{\mathbb{C}} \mathbb {U}_{\mu}$ for any irreducible $\SO_n(\mathbb{C})$-submodule $\mathbb{U}_\mu$.  Let $k$ be such that $x_{k+1} = \dots = x_{m} = 0 < x_k$. For any $\delta \in \mathbb{R}$ with $0 < \delta < x_k$, let $y \in \mathbb{R}^n$ be given by
\[
y = (x_1 - \delta ,\dots, x_k - \delta,  0 ,\dots,  0).
\]
By its definition, $f_n(x)$ only depends on $x_i - x_j$ and $x_i + x_j$ for any $i,j \in \{1,\dots, m\}$. So
\[
y_i - y_j \le x_i - x_j,\quad y_i 
+ y_j \le x_i + x_j
\]
for any $i,j \in \{1,\dots, m\}$ and strict inequality must hold for some $(i,j)$. Hence $f_n (y) < f_n(x)$. 

Let $\mathbb{V}$ be an irreducible $\SO_n(\mathbb{R})$-module of dimension at most $\frac{1}{2} (n-1)(n+2)$.  By Lemma~\ref{lem:lowdim0},  $\mathbb{V} = \mathbb{V}_{\mu}$ for some $\mu$ as in \eqref{eq:mu}. Proposition~\ref{prop:irrep} assures that $f_n(\mu) = \dim_{\mathbb{R}} \mathbb{V}_\mu$.  In particular,  if $\mu = (\mu_1, \dots, \mu_k,  0,\dots, 0)$ and $\mu_k > 0$,  then
\begin{equation}\label{lem:lowdim:eq1}
\dim \mathbb{V}_{\mu} = f_n(\mu) > f_n(\mu_1-1, \dots, \mu_k-1,  0,\dots, 0) =  \dim \mathbb{V}_{\mu_1-1, \dots, \mu_k-1,  0,\dots, 0}.
\end{equation}

As $\dim \mathbb{V}_{2,0,\dots, 0} = \frac12 (n-1)(n+2)$, if $\mu_1  - \mu_2 \ge 2$, then  $\dim \mathbb{V}_\mu \ge \dim \mathbb{V}_{2,0,\dots, 0}$ by applying \eqref{lem:lowdim:eq1} recursively.  We claim that equality holds if only if $\mu = (2,0,\dots,  0)$. Suppose not and we have $\mu_1 \ge 3$. If $\mu_2 \ge 1$, then \eqref{lem:lowdim:eq1} applies to give  $\dim \mathbb{V}_\mu > \frac12 (n-1)(n+2)$. If $\mu_2 = 0$, then $\mu = (\mu_1, 0 ,\dots,  0)$ and by \eqref{eq:dimension},
\[
\dim \mathbb{V}_\mu = \dim \mathbb{V}_{\mu_1, 0 ,\dots,  0} = \begin{cases} 
\biggl(1 +  \dfrac{2\mu_1}{n-2} \biggr) \dbinom{n - 3 + \mu_1}{\mu_1} &\text{if } n = 2m+1, \\[4ex]
\biggl( 1 + \dfrac{\mu_1}{m-1} \biggr) \dbinom{n  -3 + \mu_1 }{\mu_1} &\text{if } n = 2m.
\end{cases}
\]
In either case we  get $\dim \mathbb{V}_\mu > \frac12 (n-1)(n+2)$ as $\mu_1 \ge 3$ and $n \ge 17$.  Therefore if $\dim \mathbb{V}_\mu = \frac12 (n-1)(n+2)$, then $\mu =( 2,0,\dots, 0)$.

If $\dim \mathbb{V}_\mu < \dim \mathbb{V}_{2,0,\dots, 0}$, then $\mu_1  - \mu _2 = 1$ or $0$.  We consider the two cases below.

\underline{\textsc{Case I.} $\mu_1  - \mu _2 = 1$}:  In this case $\mu = (\mu_1,  \mu_1 - 1,  \mu_3 , \dots, \mu_m)$.  If $\mu_1 -  \mu_3 \ge 2$, we have a contradiction:
\[
\dim \mathbb{V}_\mu  \ge \dim \mathbb{V}_{2,  1,  0,\dots,  0} > \dim \mathbb{V}_{2,  0,\dots,  0}.
\]
As before, the first inequality comes from applying \eqref{lem:lowdim:eq1} recursively and the second is by a direct calculation --- all similar inequalities below are obtained using this same recipe. If $\mu_1 - \mu_3 = 1$ and $\mu_3 \ge \mu_4 + 1$, we have a contradiction:
\[
\dim \mathbb{V}_\mu  \ge \dim \mathbb{V}_{\mu_1 - \mu_3 + 1,  \mu_1 - \mu_3,  1,0 , \dots,  0} = \dim \mathbb{V}_{2,  1,  1,0 , \dots,  0} > \dim \mathbb{V}_{2,  0,\dots,  0}.
\]
If $\mu_1 - \mu_3 = 1$ and $\mu_3 = \cdots = \mu_q > \mu_{q+1}$ for some $m \ge q \ge 4$, then $\mu = (1,0,\dots, 0)$ or otherwise we have a contradiction:
\[
\dim \mathbb{V}_\mu \ge \dim \mathbb{V}_{\mu_1 - \mu_q + 1,  \mu_1 - \mu_q,   1^{q-2},0, \dots, 0} = \dim \mathbb{V}_{2,  1^{q-1}, 0, \dots, 0} > \dim \mathbb{V}_{2,  0,\dots,  0}.
\]

\underline{\textsc{Case II.} $\mu_1 - \mu_2 = 0$}:  In this case $\mu = (\mu_1,  \mu_1,  \mu_3,\dots,  \mu_m)$.  If $\mu_1 - \mu_3 \ge 1$ and $\mu_3 - \mu_4 \ge 1$, we have a contradiction:
\[
\dim \mathbb{V}_\mu \ge \dim \mathbb{V}_{\mu_1 - \mu_4,  \mu_1 - \mu_4,  \mu_3 - \mu_4,  0, \dots, 0} 
\ge \dim \mathbb{V}_{\mu_1 - \mu_3 +1,  \mu_1 - \mu_3 + 1,  1, 0,\dots, 0} 
\ge \dim \mathbb{V}_{2,0,\dots, 0}.
\]
If $\mu_1 - \mu_3 \ge 1$ and $\mu_3 = \mu_q > \mu_{q+1}$ for some $m \ge q \ge 4$, then $\mu = (1,1,0,\dots,  0)$ or otherwise $\mu_1 \ge 2$ and we have a contradiction:
\begin{align*}
\dim \mathbb{V}_\mu &\ge \dim \mathbb{V}_{\mu_1 - \mu_{q+1},  \mu_1 - \mu_{q+1} ,\mu_q - \mu_{q+1}, \dots,  \mu_q - \mu_{q+1}, 0,\dots, 0} \\
&\ge \dim \mathbb{V}_{\mu_1 - \mu_q + 1,  \mu_1 - \mu_q + 1 ,1^{q-2}, 0,\dots, 0} \ge \dim \mathbb{V}_{2,0,\dots,  0}.
\end{align*}
If $\mu_1 =  \mu_q > \mu_{q+1}$ for some $m \ge q \ge 3$, then $\mu = (0,\dots,  0)$ or otherwise we have a contradiction: 
\[
\dim \mathbb{V}_\mu \ge \dim \mathbb{V}_{\mu_1 - \mu_{q+1},  \dots,  \mu_q - \mu_{q+1}, 0 ,\dots, 0} \ge \dim \mathbb{V}_{1^q, 0,\dots,  0} > \dim \mathbb{V}_{2,0,\dots,  0}.
\]

Hence the only possibilities are $\mathbb{V}_\mu = \mathbb{V}_{0,\dots, 0}$,  $\mathbb{V}_{1,0,\dots, 0}$, or $\mathbb{V}_{1,1,0,\dots, 0}$.  
\end{proof}

We now show that with the possible exceptions of some low-dimensional cases, the $\SO_n(\mathbb{R})$-equivariant embedding in Proposition~\ref{prop:flagV2} has the lowest possible ambient dimension.
\begin{theorem}[Minimal equivariant embedding of flag manifolds]\label{thm:minimal flag}
Let $0 \eqqcolon k_0  <   k_1 < \cdots <  k_p  < k_{p+1} \coloneqq n$ be integers with $n \ge 17$. Let $\mathbb{V}$ be an $\SO_n(\mathbb{R})$-module of the smallest dimension such that $\Flag(k_1,\dots,  k_p,\mathbb{R}^n)$ is an $\SO_n(\mathbb{R})$-submanifold of $\mathbb{V}$. Then $\mathbb{V}  \simeq \Sym^2_\oh (\mathbb{R}^n)$.
\end{theorem}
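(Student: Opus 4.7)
The plan is to combine Proposition~\ref{prop:flagV2}, which already supplies the upper bound $\dim \mathbb{V} \le \frac12(n-1)(n+2)$, with a representation-theoretic analysis of the irreducible summands of $\mathbb{V}$ and their $H$-fixed subspaces, where $H \coloneqq \S(\O_{n_1}(\mathbb{R}) \times \cdots \times \O_{n_{p+1}}(\mathbb{R}))$ is the stabilizer of the base flag.

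First I will decompose $\mathbb{V}$ into its isotypic components. By Lemma~\ref{lem:lowdim}, under the dimension bound the only irreducibles that may appear are $\mathbb{V}_{0,\dots, 0}$, $\mathbb{V}_{1,0,\dots, 0}$, $\mathbb{V}_{1,1,0,\dots, 0}$, and $\mathbb{V}_{2,0,\dots, 0}$. The standard fact about equivariant maps from a homogeneous space $\SO_n(\mathbb{R})/H$ into an $\SO_n(\mathbb{R})$-module $\mathbb{V}$ is that such a map is determined by the image $v_0 \in \mathbb{V}^H$ of the base coset $eH$, and the orbit $\SO_n(\mathbb{R}) \cdot v_0$ must be diffeomorphic to $\Flag(k_1,\dots, k_p,\mathbb{R}^n)$, hence positive-dimensional. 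So the argument reduces to computing $(\mathbb{V}_\mu)^H$ for the four relevant $\mu$.

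The heart of the argument is a direct block-matrix computation showing
\[
(\mathbb{V}_{0,\dots, 0})^H = \mathbb{R},\qquad (\mathbb{V}_{1,0,\dots, 0})^H = (\mathbb{R}^n)^H = 0,\qquad (\mathbb{V}_{1,1,0,\dots, 0})^H = (\Alt^2(\mathbb{R}^n))^H = 0,
\]
with $\SO_n(\mathbb{R})$ acting trivially on the first. For $(\mathbb{R}^n)^H$: writing $v = (v_1,\dots, v_{p+1})$ in block form, if $n_i \ge 2$ I vary $X_i \in \SO_{n_i}(\mathbb{R})$ freely while keeping the other $X_j = I$, forcing $v_i = 0$; if $n_i = 1$ I exhibit an element of $H$ with $X_i = -1$ (using a reflection in another block to meet the determinant-one constraint), again forcing $v_i = 0$. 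For $(\Alt^2)^H$: the diagonal blocks are $\O_{n_i}(\mathbb{R})$-invariant skew-symmetric matrices, hence zero; the off-diagonal blocks $A_{ij}$ must satisfy $X_i A_{ij} X_j^\tp = A_{ij}$ for independently chosen $X_i, X_j$, which forces $A_{ij} = 0$ by similar test elements.

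With these computations in hand, the conclusion is immediate: if $\mathbb{V}$ contained no copy of $\mathbb{V}_{2,0,\dots, 0}$, then $\mathbb{V}^H$ would lie entirely in the trivial isotypic summand, on which $\SO_n(\mathbb{R})$ acts by the identity, so the orbit $\SO_n(\mathbb{R}) \cdot v_0$ would be the single point $\{v_0\}$, contradicting $\dim \Flag > 0$. Hence $\mathbb{V}$ contains $\mathbb{V}_{2,0,\dots, 0}$; since $\dim \mathbb{V}_{2,0,\dots, 0} = \frac12(n-1)(n+2)$ already saturates the upper bound, minimality forces $\mathbb{V} \simeq \mathbb{V}_{2,0,\dots, 0} \simeq \Sym^2_\oh(\mathbb{R}^n)$. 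The main obstacle I foresee is the bookkeeping in the $H$-fixed-point calculations --- in particular handling small blocks ($n_i = 1$) and the determinant-one constraint defining $\S(\cdots)$ --- but each case reduces to a short, elementary argument once the block decomposition is laid out.
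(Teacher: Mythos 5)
Your proof is correct, and it takes a genuinely different --- and arguably cleaner --- route than the paper. The paper's proof proceeds by a case analysis on the possible decomposition $\mathbb{V} = \mathbb{V}_{0,\dots, 0}^{\oplus a} \oplus \mathbb{V}_{1,0\dots, 0}^{\oplus b} \oplus \mathbb{V}_{1,1,0\dots, 0}^{\oplus c}$: it first shows $a = 0$ by subtracting a constant, bounds $c \le 1$ by counting dimensions, and when $c=1$ considers the nonzero image $A_0 \in \mathbb{V}_{1,1,0,\dots,0}$ of the base flag, invokes Thompson's normal form for the skew-symmetric matrix $A_0$, analyzes the identity component of $\Stab(A_0)$, and exhibits an explicit element of $\S(\O_{n_1}\times\cdots\times\O_{n_{p+1}})$ outside $\Stab(A_0)$; the case $c=0$ is handled by a separate rank argument. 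You replace all of this with a single uniform principle --- an equivariant map on $\SO_n(\mathbb{R})/H$ is determined by an $H$-fixed vector $v_0$, and $v_0$ must have a positive-dimensional orbit --- together with the elementary block computation that $(\mathbb{R}^n)^H = 0$ and $(\Alt^2(\mathbb{R}^n))^H = 0$. Both approaches are, at bottom, verifying that $H$ fixes no nonzero skew-symmetric matrix: the paper does this indirectly via normal forms and stabilizer structure, whereas you do it directly block by block. Your route has the advantage of treating the $\mathbb{V}_{0,\dots,0}$, $\mathbb{V}_{1,0,\dots,0}$, and $\mathbb{V}_{1,1,0,\dots,0}$ summands simultaneously, avoiding the multiplicity bookkeeping ($a=0$, $c\le 1$) and the normal-form citation entirely. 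One small point worth spelling out if you write this up: the parity/determinant constraint in $\S(\O_{n_1}\times\cdots\times\O_{n_{p+1}})$ is always satisfiable because $p+1\ge 2$ and $n\ge 17$ rules out the degenerate $n_1=n_2=1$, $p+1=2$ configuration; you gestured at this, but it is the only place the hypotheses on $p$ and $n$ enter the fixed-point calculation (beyond needing Lemma~\ref{lem:lowdim}, which requires $n\ge 17$).
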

\begin{proof}
By Proposition~\ref{prop:flagV2}, we know that  $\mathbb{V}_{2,0,\dots,0}  \simeq \Sym^2_\oh (\mathbb{R}^n)$ serves as an ambient space of  dimension $\frac12 (n-1)(n+2)$ in which  $\Flag(k_1,\dots,  k_p,\mathbb{R}^n)$ equivariantly embeds. Let  $\varepsilon: \Flag(k_1,\dots,  k_p, n) \to \mathbb{V}$ be an $\SO_n(\mathbb{R})$-equivariant embedding where $\mathbb{V}$ has the lowest possible dimension and $ \mathbb{V} \not\simeq \mathbb{V}_{2,0,\dots,  0}$.  We will derive a contraction.

By Proposition~\ref{prop:flagV2}, we must have $\dim \mathbb{V} \le \dim \mathbb{V}_{2,0,\dots,0} = \frac12 (n-1)(n+2)$. We have assumed that $ \mathbb{V} \not\simeq \mathbb{V}_{2,0,\dots,  0}$; so by Lemma~\ref{lem:lowdim}, a decomposition of $\mathbb{V}$ into irreducible $\SO_n(\mathbb{R})$-submodules must take the form
\[
\mathbb{V} = \mathbb{V}_{0,\dots, 0}^{\oplus a} \oplus  \mathbb{V}_{1,0\dots, 0}^{\oplus b}  \oplus \mathbb{V}_{1,1,0\dots, 0}^{\oplus c},
\]
where $a,b,c \in \mathbb{N}$ satisfy
\[
a + b n + c \binom{n}{2} \le \frac12 (n-1)(n+2).
\]

We claim that $a = 0$. Consider the projection $\pi_0: \mathbb{V} \to \mathbb{V}_{0,\dots, 0}^{\oplus a}$.  Since $\varepsilon$ and $\pi_0$ are $\SO_n(\mathbb{R})$-equivariant,  $\SO_n(\mathbb{R})$ acts on $\Flag(k_1,\dots,  k_p,\mathbb{R}^n)$ transitively, and $\mathbb{V}_{0,\dots, 0}$ is the trivial representation,  $\pi_0 \circ \varepsilon$ must be a constant map.  Thus $\varepsilon - \pi_0 \circ \varepsilon$ is an equivariant embedding of $\Flag(k_1,\dots,  k_p,\mathbb{R}^n)$ into  the  ambient space $\mathbb{V}_{1,0\dots, 0}^{\oplus b} \oplus \mathbb{V}_{1,1,0\dots, 0}^{\oplus c}$, which has dimension  lower than $\mathbb{V}$ unless $a = 0$.

We must also have $c \le 1$ since 
\[
c \le \frac{(n-1)(n+2)}{n(n-1)} = \frac{n+2}{n} < 2.
\]
If $c = 1$,  consider the map $\pi_{1,1} \circ \varepsilon: \Flag(k_1,\dots,  k_p,n) \to \mathbb{V}_{1,1,0\dots, 0}$ where $\pi_{1,1}: \mathbb{V} \to \mathbb{V}_{1,1,0\dots, 0}$ is the projection.  Since $\pi_{1,1} \circ \varepsilon$ is $\SO_n(\mathbb{R})$-equivariant and $\SO_n(\mathbb{R})$ acts on $\Flag(k_1,\dots,  k_p,\mathbb{R}^n)$ transitively,  for any $(\mathbb{W}_1 \subseteq \cdots \subseteq \mathbb{W}_p) \in \Flag(k_1,\dots,  k_p,\mathbb{R}^n)$, we must have
\[
\pi_{1,1} \circ \varepsilon ( \Flag(k_1,\dots,  k_p,\mathbb{R}^n) ) = \{Q A_0 Q^\tp: Q\in \SO_n(\mathbb{R})\},
\]
where $A_0 \coloneqq \pi_{1,1} \circ \varepsilon (\mathbb{W}_1 \subseteq \cdots \subseteq \mathbb{W}_p) \in \mathbb{V}_{1,1,0\dots, 0}$.  If $A_0 = 0$, then $\pi_{1,1}\circ \varepsilon$ is the zero map, leading to the conclusion that $\varepsilon$ is an $\SO_n(\mathbb{R})$-equivariant embedding of $\Flag(k_1,\dots,  k_p,\mathbb{R}^n)$ into $\mathbb{V}_{1,0\dots, 0}^{\oplus b}$, and thereby contradicting minimality of $\mathbb{V}$.  Hence $A_0 \ne 0$. In which case
\[
\Stab(A_0) \coloneqq \lbrace
Q\in \SO_n(\mathbb{R}): Q A_0 Q^\tp = A_0
\rbrace \supseteq \S(\O_{n_1}(\mathbb{R}) \times \cdots \times \O_{n_{p+1}}(\mathbb{R}))
\]
and so the identity component of $\Stab(A_0)$,
\[
G_0 \supseteq \SO_{n_1}(\mathbb{R}) \times \cdots \times \SO_{n_{p+1}}(\mathbb{R}).
\]
We may assume that $A_0$ is in its normal form  \cite{Thompson88}
\begin{align*}
A_0 &= \begin{bmatrix}
J_1 & 0 & \cdots &0  \\
0 & J_2 & \cdots & 0  \\
\vdots & \vdots & \ddots & \vdots \\
0 & 0 & \cdots & J_{p+1} 
\end{bmatrix} \in \mathbb{R}^{n \times n},
\intertext{where for $i =1,\dots,p$,}
J_i &= \begin{bmatrix}
0 & \lambda_i I_{m_i} \\
-\lambda_i I_{m_i} & 0
\end{bmatrix} \in \mathbb{R}^{2m_i \times 2m_i},
\end{align*}
with $\lambda_1 > \cdots > \lambda_p > 0$ and $m_1,\dots,  m_p \in \mathbb{N}$ satisfying
\[
2 m_1 + \cdots  + 2 m_p = \rank(A_0) \eqqcolon r.
\]
If $n > r$, then the bottom right block $J_{p+1} \in \mathbb{R}^{(n-r) \times (n-r)}$ is set to be the zero matrix. 

A direct calculation shows that 
\[
G_0 =\biggl\lbrace \begin{bmatrix}
I_r & 0 \\
0 & V
\end{bmatrix}: V \in \SO_{n-r}(\mathbb{R})\biggr\rbrace \supseteq \SO_{n_1}(\mathbb{R}) \times \cdots \times \SO_{n_{p+1}}(\mathbb{R})
\]
if and only if 
\[
p+1 \ge r +1,\qquad n_1 = \cdots = n_r = 1.
\]
Now observe that for any $V\in \O_{n_{r + 1}}(\mathbb{R})$ with $\det(V) = - 1$,  the matrix
\[
 \begin{bmatrix}
I_{r - 2} & 0 & 0 & 0 & 0 \\
0& 1 &  0 & 0 & 0 \\
0 & 0 & -1 & 0 & 0 \\
0 & 0 & 0 & V  & 0 \\ 
0 & 0 & 0 & 0 & I_{n - r - n_{r + 1}} \\
\end{bmatrix} \in \S(\underbrace{\O_{1}(\mathbb{R})\times \cdots \times \O_{1}(\mathbb{R})}_{r\text{~copies}} \times \O_{n_{r + 1}}(\mathbb{R}) \times \cdots \times \O_{n_{p+1}}(\mathbb{R}))
\]
but is not in $\Stab(A_0)$. This yields the required contradiction. 

It remains to consider the case $\mathbb{V} = \mathbb{V}_{1,0\dots, 0}^{\oplus b} \simeq \mathbb{R}^{n\times b}$. The action here is $\SO_{n}(\mathbb{R}) \times \mathbb{R}^{n \times k} \to \mathbb{R}^{n \times k}$, $(Q, Y) \mapsto QY$. Since $\varepsilon$ is $\SO_n(\mathbb{R})$-equivariant,  $0 \not\in \varepsilon (\Flag(k_1,\dots,  k_p,\mathbb{R}^n))$.  Given a nonzero $Y \in \mathbb{R}^{n\times b}$ of rank $q >0$,
\[
\Stab(Y) =\lbrace
Q\in \SO_n(\mathbb{R}):  QY = Y
\rbrace \simeq \SO_{n-q}(\mathbb{R}),
\]
from which it follows that $\S(\O_{n_1}(\mathbb{R}) \times \cdots \times \O_{n_{p+1}}(\mathbb{R})) \not\subseteq \Stab(Y)$, a contradiction.
\end{proof}
In general, the isospectral model in \eqref{eq:iso} is a submanifold of $\Sym^2 (\mathbb{R}^n)$ but it can always be moved into  $\Sym^2_\oh (\mathbb{R}^n)$ by a translation. We will call an isospectral model that sits inside  $\Sym^2_\oh (\mathbb{R}^n)$ a \emph{traceless isospectral model}. In Proposition~\ref{prop:min}, we will see that they give exactly the minimal equivariant embedding in Theorem~\ref{thm:minimal flag}. 
\begin{proposition}[Traceless isospectral model]\label{prop:classification}
If $\varepsilon: \Flag(k_1,\dots,  k_p,\mathbb{R}^n) \to \Sym^2 (\mathbb{R}^n)$ is an $\SO_n(\mathbb{R})$-equivariant embedding,  then there exist an $\SO_n(\mathbb{R})$-equivariant embedding $\varepsilon_{\circ}: \Flag(k_1,\dots,  k_p,\mathbb{R}^n) \to \Sym^2_\oh (\mathbb{R}^n)$ and  $c\in \mathbb{R}$ such that  $\varepsilon = \varepsilon_{\circ} + c$, i.e.,
\[
\varepsilon (\mathbb{W}_1 \subseteq \cdots \subseteq \mathbb{W}_p) = \varepsilon_{\circ}(\mathbb{W}_1 \subseteq \cdots \subseteq \mathbb{W}_p) +  c I
\]
for any $(\mathbb{W}_1 \subseteq \cdots \subseteq \mathbb{W}_p)\in \Flag(k_1,\dots,  k_p,\mathbb{R}^n)$.
\end{proposition}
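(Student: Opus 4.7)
The strategy is to exploit the $\SO_n(\mathbb{R})$-module decomposition $\Sym^2(\mathbb{R}^n) = \mathbb{R}I \oplus \Sym^2_\oh(\mathbb{R}^n)$, where the first summand is the trivial representation (the conjugation action fixes $cI$ pointwise since $Q(cI)Q^\tp = cI$). Any equivariant map into $\Sym^2(\mathbb{R}^n)$ splits as a sum of two equivariant maps, one into each summand, via the canonical projection $\pi_0: \Sym^2(\mathbb{R}^n) \to \mathbb{R}I$, $X \mapsto \frac{\tr(X)}{n} I$, and its complement $\pi_\oh: \Sym^2(\mathbb{R}^n) \to \Sym^2_\oh(\mathbb{R}^n)$, $X \mapsto X - \frac{\tr(X)}{n} I$. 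Both projections are $\SO_n(\mathbb{R})$-equivariant since the decomposition is a decomposition of $\SO_n(\mathbb{R})$-modules.

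The first step is to show that $\pi_0 \circ \varepsilon$ is constant. Since $\varepsilon$ and $\pi_0$ are both $\SO_n(\mathbb{R})$-equivariant, so is their composition $\pi_0 \circ \varepsilon: \Flag(k_1,\dots,k_p,\mathbb{R}^n) \to \mathbb{R}I$. But $\SO_n(\mathbb{R})$ acts trivially on $\mathbb{R}I$, and it acts transitively on $\Flag(k_1,\dots,k_p,\mathbb{R}^n)$ via \eqref{eq:diffeo}; so for any two flags $x, y$, choosing $Q$ with $Q \cdot x = y$ yields $(\pi_0 \circ \varepsilon)(y) = Q \cdot (\pi_0 \circ \varepsilon)(x) = (\pi_0 \circ \varepsilon)(x)$. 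Therefore there exists a unique $c \in \mathbb{R}$ with $(\pi_0 \circ \varepsilon)(x) = cI$ for every $x$.

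The second step is to define $\varepsilon_\oh \coloneqq \varepsilon - cI = \pi_\oh \circ \varepsilon$ and verify the required properties. By construction, $\varepsilon_\oh$ takes values in $\Sym^2_\oh(\mathbb{R}^n)$. It is $\SO_n(\mathbb{R})$-equivariant: for any $Q \in \SO_n(\mathbb{R})$ and $x \in \Flag(k_1,\dots,k_p,\mathbb{R}^n)$, we have $\varepsilon_\oh(Q \cdot x) = \varepsilon(Q \cdot x) - cI = Q \varepsilon(x) Q^\tp - Q(cI)Q^\tp = Q \varepsilon_\oh(x) Q^\tp$. Finally, the translation $T_{-cI}: \Sym^2(\mathbb{R}^n) \to \Sym^2(\mathbb{R}^n)$, $X \mapsto X - cI$, is a diffeomorphism of $\Sym^2(\mathbb{R}^n)$ restricting to a diffeomorphism onto the affine subspace $\Sym^2_\oh(\mathbb{R}^n) + cI$, and $\varepsilon_\oh = T_{-cI} \circ \varepsilon$ is the composition of an embedding with a diffeomorphism, hence itself an embedding. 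This gives the required decomposition $\varepsilon = \varepsilon_\oh + cI$.

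There is no serious obstacle here; the entire argument rests on the observation that the trivial $\SO_n(\mathbb{R})$-component of any equivariant map out of a transitive $G$-space is forced to be constant. The only point worth double-checking is that adding the scalar matrix $cI$ commutes with the conjugation action, which holds because $cI$ is central.
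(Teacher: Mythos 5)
Your argument is correct and follows the same approach the paper sketches: it rests on the $\SO_n(\mathbb{R})$-module decomposition $\Sym^2(\mathbb{R}^n) = \mathbb{R}I \oplus \Sym^2_\oh(\mathbb{R}^n)$, projects $\varepsilon$ onto each summand, and uses transitivity of the $\SO_n(\mathbb{R})$-action together with the triviality of the $\mathbb{R}I$-component to conclude that the scalar part is constant. The paper compresses this entire reasoning into the single sentence that any equivariant embedding ``admits a decomposition compatible with \eqref{eq:irrep2n},'' so your version usefully fills in the details it leaves implicit (why the trivial component is forced constant, and why subtracting $cI$ preserves the embedding property).
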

\begin{proof}
The decomposition of  $\Sym^2 (\mathbb{R}^n) $ into irreducible $\SO_n(\mathbb{R})$-submodules is simply
\begin{equation}\label{eq:irrep2n}
\Sym^2 (\mathbb{R}^n) = \mathbb{R} I \oplus \Sym_{\oh}^2 (\mathbb{R}^n) \simeq \mathbb{V}_{0,\dots, 0} \oplus \mathbb{V}_{2,0\dots, 0}.
\end{equation}
It is clear that any $\SO_n(\mathbb{R})$-equivariant embedding of $\Flag(k_1,\dots,  k_p,\mathbb{R}^n)$ in $\Sym^2 (\mathbb{R}^n)$ admits a decomposition that is compatible with \eqref{eq:irrep2n}.
\end{proof}

The traceless isospectral models are the unique models with the lowest possible ambient dimension alluded to in Theorem~\ref{thm:minimal flag}.
\begin{proposition}[Minimal equivariant matrix model]\label{prop:min}
Let $0 \eqqcolon k_0  <   k_1 < \cdots <  k_p  < k_{p+1} \coloneqq n$ be integers and  $n\ge 17$. If $\mathbb{V}$ is an $\SO_n(\mathbb{R})$-module of the lowest dimension such that $\Flag(k_1,\dots,k_p,\mathbb{R}^n)$ is an $\SO_n(\mathbb{R})$-submanifold of $\mathbb{V}$,  then there is an $\SO_n(\mathbb{R})$-module isomorphism $\mathbb{V} \simeq \mathsf{S}^2_\oh(\mathbb{R}^n)$ and distinct $a_1,\dots, a_{p+1} \in \mathbb{R}$ with
\[
\sum_{i=0}^p (k_{i+1} - k_i) a_{i+1} = 0
\]
such that the following diagram commutes:
\[
\begin{tikzcd}
	{\mathbb{V}} && {\mathbb{V}_{2,0\dots, 0} \simeq \mathsf{S}^2_\oh(\mathbb{R}^n)} \\
	{\Flag(k_1,\dots,k_p,\mathbb{R}^n)} && {\Flag_{a_1,\dots,  a_{p+1}}(k_1,\dots,  k_p, n)}
	\arrow["\simeq", from=1-1, to=1-3]
	\arrow[hook, from=2-1, to=1-1]
	\arrow["\simeq", from=2-1, to=2-3]
	\arrow[hook, from=2-3, to=1-3]
\end{tikzcd}
\]
where $\Flag_{a_1,\dots,  a_{p+1}}(k_1,\dots,  k_p, n)$ is as defined in \eqref{eq:iso}.
\end{proposition}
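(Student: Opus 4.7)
By Theorem~\ref{thm:minimal flag}, there is an $\SO_n(\mathbb{R})$-module isomorphism $\phi : \mathbb{V} \xrightarrow{\sim} \Sym^2_\oh(\mathbb{R}^n)$, and replacing $\varepsilon$ with $\phi \circ \varepsilon$ it suffices to show that every $\SO_n(\mathbb{R})$-equivariant embedding $\widetilde\varepsilon : \Flag(k_1, \dots, k_p, \mathbb{R}^n) \hookrightarrow \Sym^2_\oh(\mathbb{R}^n)$ has image equal to some $\Flag_{a_1, \dots, a_{p+1}}(k_1, \dots, k_p, n)$. Pick the standard flag $F_0 = (\mathbb{R}^{k_1} \subseteq \cdots \subseteq \mathbb{R}^{k_p})$, whose $\SO_n(\mathbb{R})$-stabilizer is $H \coloneqq \S(\O_{n_1}(\mathbb{R}) \times \cdots \times \O_{n_{p+1}}(\mathbb{R}))$, and set $A_0 \coloneqq \widetilde\varepsilon(F_0)$. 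Transitivity and equivariance give $\widetilde\varepsilon(\Flag) = \{ Q A_0 Q^\tp : Q \in \SO_n(\mathbb{R})\}$, and injectivity of $\widetilde\varepsilon$ forces the conjugation stabilizer $\Stab(A_0)$ to equal $H$ exactly.

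The core step is to classify $A_0 \in \Sym^2_\oh(\mathbb{R}^n)$ with $H \subseteq \Stab(A_0)$. Restricting to the identity component $H_0 = \SO_{n_1}(\mathbb{R}) \times \cdots \times \SO_{n_{p+1}}(\mathbb{R})$, which preserves the block decomposition $\mathbb{R}^n = \bigoplus_i \mathbb{R}^{n_i}$, each summand $\mathbb{R}^{n_i}$ with $n_i \geq 2$ is an irreducible $H_0$-module, and distinct indices give pairwise non-isomorphic irreducibles. Schur's lemma then forces any $H_0$-equivariant symmetric endomorphism to be block-diagonal, acting as a scalar $a_i I_{n_i}$ on each $\mathbb{R}^{n_i}$ with $n_i \geq 2$ (noting that for $n_i = 2$ the $\SO_2$-commutant on $\mathbb{R}^2$ is a copy of $\mathbb{C}$, but only its scalar line is symmetric). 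On the sum of the 1-dimensional summands $H_0$ acts trivially, so we exploit the non-identity components of $H$: for any two distinct 1-dim block indices $i, j$, produce an element $Q \in H$ that flips the sign of the $i$-th coordinate but not the $j$-th, using either a third 1-dim block index $k$ to absorb parity, or an element of determinant $-1$ inside an $\O_{n_l}$-block of size $\geq 2$ when fewer 1-dim blocks are available. Commuting with all such $Q$ kills the off-diagonal entries between 1-dim blocks, and we conclude $A_0 = \diag(a_1 I_{n_1}, \dots, a_{p+1} I_{n_{p+1}})$ for some $a_1, \dots, a_{p+1} \in \mathbb{R}$.

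The tracelessness of $A_0$ then gives $\sum_{i=1}^{p+1} n_i a_i = \sum_{i=0}^{p} (k_{i+1}-k_i) a_{i+1} = 0$, the stated relation. Finally, if $a_i = a_j$ for some $i \neq j$, the $i$-th and $j$-th blocks would merge and the conjugation stabilizer of $A_0$ would strictly contain $H$, contradicting $\Stab(A_0) = H$; hence the $a_i$'s are distinct. Therefore $\widetilde\varepsilon(\Flag) = \Flag_{a_1, \dots, a_{p+1}}(k_1, \dots, k_p, n)$ from \eqref{eq:iso}, and the commutative diagram is obtained from $\phi$ along the top and the induced orbit identification $\Flag \cong \Flag_{a_1, \dots, a_{p+1}}$ along the bottom. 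The main technical obstacle I anticipate is the Schur-type classification of symmetric $H$-commutants — specifically, handling the $n_i = 2$ case where the commutant of $\SO_2$ is not a priori scalar, and killing cross-terms between 1-dimensional blocks via carefully chosen sign-flip elements in $H \setminus H_0$ — but each case reduces to an elementary matrix computation once the appropriate element of $H$ is identified.
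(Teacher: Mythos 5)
Your proof is correct and follows the same outline as the paper: use Theorem~\ref{thm:minimal flag} to reduce the ambient module to $\mathsf{S}^2_\oh(\mathbb{R}^n)$, identify the image as a single $\SO_n(\mathbb{R})$-orbit by transitivity and equivariance, pin the base point $A_0$ down to $\diag(a_1 I_{n_1},\dots,a_{p+1}I_{n_{p+1}})$ with distinct $a_i$ via stabilizer considerations, and then read off tracelessness. The paper compresses the classification of $A_0$ into a single assertion (``equivariance implies the image is the orbit of a diagonal matrix\dots''), and your $\Stab(A_0)=\S(\O_{n_1}\times\cdots\times\O_{n_{p+1}})$ equality (from equivariance one way and injectivity the other), the Schur-lemma analysis of the $H$-commutant inside symmetric matrices, and the parity-compensated sign flips to kill cross-terms between $1$-dimensional blocks are a correct and complete way to justify it; a slightly shorter route, probably what the authors had in mind given the ``isospectral'' theme, is to diagonalize $A_0$ by the spectral theorem and observe that requiring $\Stab(A_0)=\S(\O_{n_1}\times\cdots\times\O_{n_{p+1}})$ forces the eigenspaces to be exactly the coordinate blocks with pairwise distinct eigenvalues.
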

\begin{proof}
The existence of an $\SO_n(\mathbb{R})$-module isomorphism $\varphi : \mathbb{V} \to \mathbb{V}_{2,0\dots, 0}$ is guaranteed by Theorem~\ref{thm:minimal flag}. The inclusion map $\iota : \mathbb{V}_{2,0\dots, 0} \to \mathsf{S}^2 (\mathbb{R}^n)$ is obviously an injective $\SO_n(\mathbb{R})$-module homomorphism. Given an $\SO_n(\mathbb{R})$-equivariant embedding $\varepsilon : \Flag(k_1,\dots,k_p,\mathbb{R}^n) \to \mathbb{V}$, composing these three maps gives an $\SO_n(\mathbb{R})$-equivariant embedding
\[
\iota \circ \varphi \circ \varepsilon : \Flag(k_1,\dots,k_p,\mathbb{R}^n) \to \mathsf{S}_{\oh}^2 (\mathbb{R}^n).
\]
The equivariance of $\iota \circ \varphi \circ \varepsilon$ implies that the image of $\iota \circ \varphi \circ \varepsilon$ is an $\SO_n(\mathbb{R})$-orbit of a diagonal matrix $\diag(a_1 I_{n_1},\dots,  a_{p+1} I_{n_{p+1}})$ for some distinct $a_1,\dots,  a_{p+1} \in \mathbb{R}$,  thus
\[
\iota \circ \varphi \circ \varepsilon( \Flag(k_1,\dots,k_p,\mathbb{R}^n) ) = \Flag_{a_1,\dots,  a_{p+1}}(k_1,\dots,  k_p, n).
\]
Since $\mathbb{V}_{2,0\dots, 0} \simeq \mathsf{S}^2_\oh(\mathbb{R}^n)$, any $X \in \iota \circ \varphi \circ \varepsilon( \Flag(k_1,\dots,k_p,\mathbb{R}^n) )$ is traceless and so  $a_1,\dots,  a_{p+1} $ must satisfy  $\sum_{i=0}^p (k_{i+1} - k_i) a_{i+1} = \tr(X) = 0$. 
\end{proof}
Proposition~\ref{prop:min} requires $n \ge 17$ as its proof depends on Theorem~\ref{thm:minimal flag}. Every other result in this section holds true irrespective of the value of $n$. 

As an addendum, we state an analogue of Theorem~\ref{thm:minimal flag} for the Stiefel manifold $\V(k,\mathbb{R}^n)$ of orthonormal $k$-frames in $\mathbb{R}^n$.
\begin{proposition}[Minimal equivariant embedding of Stiefel manifolds]\label{prop:minimal stiefel}
Let $k,n \in \mathbb{N}$,  $n \ge 17$, and $k < (n-1)/2$. Let $\mathbb{V}$ be an $\SO_n(\mathbb{R})$-module of the smallest dimension such that $\V(k,\mathbb{R}^n)$ is an $\SO_n(\mathbb{R})$-submanifold of $\mathbb{V}$. Then $\mathbb{V} \simeq \mathbb{V}_{1,0\dots, 0}^{\oplus k} \simeq  \mathbb{R}^{n \times k}$.
\end{proposition}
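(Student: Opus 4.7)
The plan is to mirror the strategy used for Theorem~\ref{thm:minimal flag}, exploiting the stronger upper bound on $\dim \mathbb{V}$ afforded by the hypothesis $k < (n-1)/2$. The tautological inclusion
\[
\V(k,\mathbb{R}^n) \hookrightarrow \mathbb{R}^{n \times k} \simeq \mathbb{V}_{1,0,\dots,0}^{\oplus k}, \qquad (v_1,\dots,v_k) \mapsto [v_1, \dots, v_k],
\]
is visibly $\SO_n(\mathbb{R})$-equivariant, so any minimal-dimension ambient $\SO_n(\mathbb{R})$-module $\mathbb{V}$ must satisfy $\dim \mathbb{V} \le nk$. The hypothesis gives
\[
nk < \tfrac12 n(n-1) = \dim \mathbb{V}_{1,1,0,\dots, 0} < \tfrac12 (n-1)(n+2),
\]
so Lemma~\ref{lem:lowdim} applies to each irreducible summand of $\mathbb{V}$, leaving only the possibilities $\mathbb{V}_{0,\dots, 0}$, $\mathbb{V}_{1,0,\dots, 0}$, and $\mathbb{V}_{1,1,0,\dots, 0}$; the last is excluded by the dimension bound. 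Consequently $\mathbb{V} \simeq \mathbb{V}_{0,\dots, 0}^{\oplus a} \oplus \mathbb{V}_{1,0,\dots, 0}^{\oplus b}$ for some $a, b \in \mathbb{N}$.

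I would then rule out $a > 0$ by the same projection argument used in Theorem~\ref{thm:minimal flag}: because $\SO_n(\mathbb{R})$ acts transitively on $\V(k,\mathbb{R}^n)$ and trivially on $\mathbb{V}_{0,\dots, 0}^{\oplus a}$, the composition of $\varepsilon$ with the projection to the trivial isotypic component is constant, and subtracting this constant yields an equivariant embedding into $\mathbb{V}_{1,0,\dots, 0}^{\oplus b}$, contradicting minimality unless $a = 0$. This reduces us to $\mathbb{V} \simeq \mathbb{R}^{n \times b}$ with $\SO_n(\mathbb{R})$ acting by left multiplication, and only $b$ remains to be determined.

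For the lower bound on $b$, fix the base point $x_0 = (e_1, \dots, e_k) \in \V(k,\mathbb{R}^n)$, whose stabilizer is $\{\diag(I_k, R) : R \in \SO_{n-k}(\mathbb{R})\} \cong \SO_{n-k}(\mathbb{R})$. Since $\varepsilon$ is injective and equivariant, $\Stab(\varepsilon(x_0)) = \Stab(x_0) \cong \SO_{n-k}(\mathbb{R})$. On the other hand, for any $Y \in \mathbb{R}^{n \times b}$ of rank $q$, the left-multiplication stabilizer $\Stab(Y)$ is isomorphic to $\SO_{n-q}(\mathbb{R})$ --- a fact already invoked in the proof of Theorem~\ref{thm:minimal flag}. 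Matching the two forces $\rank(\varepsilon(x_0)) = k$, which in turn forces $b \ge k$. Together with the upper bound $nb = \dim \mathbb{V} \le nk$, this gives $b = k$ and therefore $\mathbb{V} \simeq \mathbb{V}_{1,0,\dots, 0}^{\oplus k} \simeq \mathbb{R}^{n \times k}$, as claimed.

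The only step requiring genuine care is the stabilizer identification $\Stab(Y) \cong \SO_{n-q}(\mathbb{R})$: one must check that $QY = Y$ is equivalent to $Q$ acting as the identity on the column space of $Y$, after which the $\det = 1$ constraint pins down its action on the orthogonal complement. Apart from that, every step is a direct specialization of the machinery already developed for the flag manifold; no new representation-theoretic input is needed, and the hypothesis $k < (n-1)/2$ is used precisely to exclude $\mathbb{V}_{1,1,0,\dots, 0}$ as a possible summand of $\mathbb{V}$.
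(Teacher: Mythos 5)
Your proposal is correct and follows essentially the same route as the paper's own proof: the tautological embedding gives the upper bound $\dim \mathbb{V} \le nk$, the hypothesis $k < (n-1)/2$ combined with Lemma~\ref{lem:lowdim} restricts the irreducible summands to $\mathbb{V}_{0,\dots,0}$ and $\mathbb{V}_{1,0,\dots,0}$, the projection argument kills the trivial summands, and the stabilizer computation $\Stab(Y)\simeq\SO_{n-\rank Y}(\mathbb{R})$ pins down $b=k$. The only cosmetic difference is that you phrase the last step as a lower bound on $b$ while the paper derives a contradiction from $\dim\mathbb{V}<kn$; the underlying content is identical.
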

\begin{proof}
Since the usual model $\V(k,n) \coloneqq \{Y \in \mathbb{R}^{n \times k} : Y^\tp Y = I \}$ for a Stiefel manifold is an $\SO_n(\mathbb{R})$-submanifold of $\mathbb{V}_{1,0\dots, 0}^{\oplus k} \simeq \mathbb{R}^{n\times k}$, it remains to rule out $\dim \mathbb{V} < kn$ as a possibility.
We have assumed $k < (n-1)/2$, so $kn < \dim \mathbb{V}_{1,1,0\dots, 0}$. Suppose $\dim \mathbb{V} < kn$. Then by Lemma~\ref{lem:lowdim}, $\mathbb{V} = \mathbb{V}_{0,\dots, 0}^{\oplus a} \oplus \mathbb{V}_{1,0\dots, 0}^{\oplus b}$ for some nonnegative integers $a,b$ such that $a + bn < kn$.
The same argument in the proof of Theorem~\ref{thm:minimal flag} yields that $a = 0$ and $\mathbb{V} = \mathbb{V}_{1,0\dots, 0}^{\oplus b} \simeq \mathbb{R}^{n\times b}$.  For any $Y\in \mathbb{R}^{n\times b}$,
\[
\Stab(Y) = \lbrace
Q\in \SO_n(\mathbb{R}):  QY = Y
\rbrace \simeq \SO_{n-q} (\mathbb{R}),
\]
where $q \coloneqq \rank(Y) \le b < k$.  Hence $\Stab(Y) \ne \SO_{n-k}(\mathbb{R})$ and it is not possible for $\V(k,\mathbb{R}^n) \simeq \SO_n(\mathbb{R})/\SO_{n-k}(\mathbb{R})$ to be an $\SO_{n}(\mathbb{R})$-submanifold of $\mathbb{V}$.
\end{proof}

\section{Conclusion}

The classical embeddings of Whitney, Nash, and Mostow--Palais embed a manifold in $\mathbb{R}^n$. The key to our effective bounds in this article is that we embed our manifold in $\mathbb{R}^{m \times n}$. While they are no different as vector spaces, $\mathbb{R}^{m \times n} \simeq \mathbb{R}^{mn}$, matrices are endowed with far richer structures --- we may multiply or decompose them; impose orthogonality or symmetry on them; calculate their determinant, norm, or rank; find their eigen- or singular values and vectors; among a myriad of yet other features.

This is not unlike group representation theory where we embed an abstract group into a matrix group; the parallel here is that we embed an abstract manifold into a manifold of matrices. While group representation theory permeates every area of pure mathematics, such ``manifold representation theory'' has, as far as we know, only been popular in computational mathematics. We hope that our work here shows that it can be useful for investigations in pure mathematics too.

\bibliographystyle{abbrv}

\begin{thebibliography}{10}

\bibitem{Bredon72}
G.~E. Bredon.
\newblock {\em Introduction to compact transformation groups}, volume Vol. 46
  of {\em Pure and Applied Mathematics}.
\newblock Academic Press, New York-London, 1972.

\bibitem{FH91}
W.~Fulton and J.~Harris.
\newblock {\em Representation theory}, volume 129 of {\em Graduate Texts in
  Mathematics}.
\newblock Springer-Verlag, New York, 1991.
\newblock A first course, Readings in Mathematics.

\bibitem{GW09}
R.~Goodman and N.~R. Wallach.
\newblock {\em Symmetry, representations, and invariants}, volume 255 of {\em
  Graduate Texts in Mathematics}.
\newblock Springer, Dordrecht, 2009.

\bibitem{Gromov86}
M.~Gromov.
\newblock {\em Partial differential relations}, volume~9 of {\em Ergebnisse der
  Mathematik und ihrer Grenzgebiete (3)}.
\newblock Springer-Verlag, Berlin, 1986.

\bibitem{Gunther89a}
M.~G\"unther.
\newblock On the perturbation problem associated to isometric embeddings of
  {R}iemannian manifolds.
\newblock {\em Ann. Global Anal. Geom.}, 7(1):69--77, 1989.

\bibitem{Gunther89b}
M.~G\"unther.
\newblock Zum {E}inbettungssatz von {J}. {N}ash.
\newblock {\em Math. Nachr.}, 144:165--187, 1989.

\bibitem{91Gunther}
M.~G\"unther.
\newblock Isometric embeddings of {R}iemannian manifolds.
\newblock In {\em Proceedings of the {I}nternational {C}ongress of
  {M}athematicians, {V}ol. {I}, {II} ({K}yoto, 1990)}, pages 1137--1143. Math.
  Soc. Japan, Tokyo, 1991.

\bibitem{HS81}
H.~Hiller and R.~E. Stong.
\newblock Immersion dimension for real {G}rassmannians.
\newblock {\em Math. Ann.}, 255(3):361--367, 1981.

\bibitem{Lam75}
K.~Y. Lam.
\newblock A formula for the tangent bundle of flag manifolds and related
  manifolds.
\newblock {\em Trans. Amer. Math. Soc.}, 213:305--314, 1975.

\bibitem{LK24a}
L.-H. Lim and K.~Ye.
\newblock Simple matrix models for the flag, {G}rassmann, and {S}tiefel
  manifolds.
\newblock {\em \url{http://www.stat.uchicago.edu/~lekheng/work/simple.pdf}},
  2024.

\bibitem{LV83}
D.~Luna and T.~Vust.
\newblock Plongements d'espaces homog\`enes.
\newblock {\em Comment. Math. Helv.}, 58(2):186--245, 1983.

\bibitem{Mayer99}
K.~H. Mayer.
\newblock Nonimmersion theorems for {G}rassmann manifolds.
\newblock {\em Topology Appl.}, 96(2):171--184, 1999.

\bibitem{MS74}
J.~W. Milnor and J.~D. Stasheff.
\newblock {\em Characteristic classes}, volume No. 76 of {\em Annals of
  Mathematics Studies}.
\newblock Princeton University Press, Princeton, NJ; University of Tokyo Press,
  Tokyo, 1974.

\bibitem{Mostow57}
G.~D. Mostow.
\newblock Equivariant embeddings in {E}uclidean space.
\newblock {\em Ann. of Math. (2)}, 65:432--446, 1957.

\bibitem{Nash56}
J.~Nash.
\newblock The imbedding problem for {R}iemannian manifolds.
\newblock {\em Ann. of Math. (2)}, 63:20--63, 1956.

\bibitem{Oproiu76}
V.~Oproiu.
\newblock Some non-embedding theorems for the {G}rassmann manifolds
  {$G\sb{2,n}$} and {$G\sb{3,n}$}.
\newblock {\em Proc. Edinburgh Math. Soc. (2)}, 20(3):177--185, 1976/77.

\bibitem{Palais57}
R.~S. Palais.
\newblock Imbedding of compact, differentiable transformation groups in
  orthogonal representations.
\newblock {\em J. Math. Mech.}, 6:673--678, 1957.

\bibitem{Thompson88}
G.~Thompson.
\newblock Normal forms for skew-symmetric matrices and {H}amiltonian systems
  with first integrals linear in momenta.
\newblock {\em Proc. Amer. Math. Soc.}, 104(3):910--916, 1988.

\bibitem{Timashev11}
D.~A. Timashev.
\newblock {\em Homogeneous spaces and equivariant embeddings}, volume 138 of
  {\em Encyclopaedia of Mathematical Sciences}.
\newblock Springer, Heidelberg, 2011.
\newblock Invariant Theory and Algebraic Transformation Groups, 8.

\bibitem{Wang22}
Z.~Wang.
\newblock Equivariant embeddings of manifolds into {E}uclidean spaces.
\newblock {\em Topology Appl.}, 320:Paper No. 108239, 6, 2022.

\bibitem{Whitney44}
H.~Whitney.
\newblock The self-intersections of a smooth {$n$}-manifold in {$2n$}-space.
\newblock {\em Ann. of Math. (2)}, 45:220--246, 1944.

\bibitem{KKL22}
K.~Ye, K.~S.-W. Wong, and L.-H. Lim.
\newblock Optimization on flag manifolds.
\newblock {\em Math. Program.}, 194(1-2):621--660, 2022.

\end{thebibliography}

\end{document}